\newcommand{\R}{\mathbb R}
\newcommand{\N}{\mathbb N}
\newcommand{\Q}{\mathbb Q}
\newcommand{\Z}{\mathbb Z}
\newtheorem{theorem}{Theorem}
\newtheorem{lemma}[theorem]{Lemma}
\newtheorem{prop}[theorem]{Proposition}
\newtheorem{cor}[theorem]{Corollary}
\theoremstyle{definition}
\newtheorem{definition}[theorem]{Definition}
\newtheorem{rem}[theorem]{Remark}
\theoremstyle{remark}
\begin{document}
\title{Periodic orbits of a one dimensional non autonomous Hamiltonian
system}
\author{J.Bellazzini, V.Benci, M.Ghimenti}
\date{}
\maketitle
\begin{center}
\small Dipartimento di Matematica Applicata \\
\small Universit\`a di Pisa\\
\small Via Bonanno Pisano 25/b 56126 PISA -Italy\\ 
\end{center}
\begin{abstract}
In this paper we study the properties of the periodic orbits of 
$\ddot{x}+V'_{x}(t,x)= 0$ with $x \in S^1$ and $V'_{x}(t,x)$ a 
$T_0$ periodic potential.
Called $\rho\in\frac{1}{T_0}\Q$ the frequency of windings of an orbit in $S^1$ 
we show that exists an infinite number
of periodic solutions with a given $\rho$. We give a lower bound on the
number of periodic orbits with a given period and $\rho$ 
by means of the Morse theory.
\end{abstract}

Key Words: Morse theory, periodic orbits, twisting number
\section{Introduction}
In this paper we study the second order Hamiltonian system
\begin{equation}
\label{equation1}
\ddot{x}+V'_{x}(t,x)= 0
\end{equation}
where $x \in S^1=\R /\Z$ and $V \in C^2(\mathbb{R} \times S^1)$ is
a periodic potential with minimal period  $T_0$. 

There are two question that we study in this paper. First, we study
the existence of periodic 
solutions of (\ref{equation1}) 
in any connected component of the space of periodic trajectories, 
i.e in the space of trajectories
that makes $k_1$ windings in $S^1$ in $k_2T_0$ time  with $k_1$ and $k_2$ 
arbitrary integers that are coprime.

Second, called $\rho(x)=\frac{k_1}{k_2T_0}$ the frequency of 
windings of $x(t)$ in 
$S^1$, we study the existence of orbits with the same $\rho$ that are not
$k_2T_0$ periodic, i.e
periodic orbits that  make $mk_1$ windings in $mk_2T_0$ time $m \in \N$, with $k_1$
and $k_2$ coprime, when the solutions are not $k_2T_0$ periodic.

The problem of the search of periodic orbits is classical 
and a standard approach to these problems is that of studying the critical 
points of the action functional
\begin{equation}\label{functional}
f(x)=\frac{1}{k_2T_0}\int_{0}^{k_2T_0}\left(\frac{1}{2}|\dot{x}|^2-V(t,x)\right)dt
\end{equation}
in the space of functions that makes 
$k_1$ windings in $k_2T_0$ time.

This method have been largely used by many people in the last
twenty years: see e.g. the book of Rabinowitz \cite{Rab02} and the
references therein. 

The problem of the search of periodic orbits with a given frequency of 
windings in $S^1$ is closely related to that of the existence of 
subharmonic orbits.
In particular, for what concern the existence of 
subharmonic orbits, we quote \cite{Sal92} for a general 
Hamiltonian system on $S^1\times T^{2n}$
under non-degeneracy condictions, \cite{Lon00} for the existence of infinitely many subharmonics for more general Lagrangian systems and 
\cite{Ser00} for the case of a second order differential equation in 
$\R^{n}$ with a time dependent periodic 
potential and a periodic forcing term with zero mean value. 

In his paper, the existence  of periodic orbits of
equation (\ref{equation1}) has been studied by means of the Morse 
relations applied to the action functional. We will show that the Morse 
relations allows to prove the existence of infinitely many orbits 
with a  given $\rho=\frac{k_1}{k_2T_0}$ and to give a lower bound on 
the number of periodic orbits with a given period.

\section{Statements of the results}

We set
$$ C^2_{k_1,k_2T_0}=\{[x]:x \in C^2( \R , \R ),\ x(t+k_2 T_0)=x(t)+k_1\};$$
where $[x]=x  \mod  1$;  namely $ C^2_{k_1,k_2T_0}$ is the
space of the periodic $ C^2$-functions which make $k_1$ windings
in $k_2T_0$
time; thus we have that $C^2_{k_1,k_2T_0} \subset C^2_{mk_1,mk_2T_0},\ m\in \N^+$.
Given a periodic orbit, $x(t)$, the rotation frequency $\rho=\rho(x)$
associated to $x(t)$ can be defined as the frequency of windings of the
periodic orbit in $S^1$, i.e. the number of windings
divided by $k_2T_0$.  Given $k_1 \in
\mathbb{Z}$ and $k_2 \in \mathbb{N}$, the periodic orbit that makes
$k_1$ windings in $k_2T_0$ time has a rotation frequency  
$\rho=\frac{k_1}{k_2T_0}$.
Clearly, the set of periodic functions with rotation frequency $\rho=\frac{k_1}{k_2T_0}$ when $k_1$ and $k_2$ are coprime
is given by $\bigcup_{m=1}^{\infty} C^2_{mk_1,mk_2T_0}$.

\begin{definition}
A periodic solution $x(t)$ of rotation frequency $\rho=\frac{k_1}{k_2T_0}$ with $k_1$ and $k_2$ coprime
is called a {\em fundamental solution}
if $x\in C^2_{k_1,k_2T_0}$. Otherwise, if  $x \notin C^2_{k_1,k_2T_0}$,
it is called {\em non-fundamental}.
\end{definition}

\begin{definition}
A periodic solution $x(t)$ is called {\em non-resonant} if the linearized equation
$\ddot{y}+V''(t,x(t))y=0$
has no periodic solution (different from 0); the equation (\ref{equation1}) is called
{\em non-resonant} if all its periodic solutions are non-resonant.
\end{definition}

From now on we assume that eq.(\ref{equation1}) is non-resonant.
This is a technical assumptions which makes easier to use Morse theory.
In fact, if $x$ is a non-resonant T-periodic solutions, it is a non-degenerate
critical point of (\ref{functional}).\\

The first result of this paper is the following

\begin{theorem}\label{primoteo}
If eq. (\ref{equation1}) is non-resonant, for every $\rho=\frac{k_1}{k_2T_0}$, with $k_1$ and $k_2$ coprime, equation
(\ref{equation1}) has exactly $2r$ fundamental  solutions with
$r>0$ and infinitely many non-fundamental solutions.
\end{theorem}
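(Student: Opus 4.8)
The plan is to work on the functional $f$ from (\ref{functional}) restricted to the Hilbert manifold $\Lambda_{k_1,k_2T_0}$ obtained by completing $C^2_{k_1,k_2T_0}$ in the $H^1_{loc}$ topology (functions $x:\R\to\R$ with $x(t+k_2T_0)=x(t)+k_1$, equipped with the natural $H^1$ norm on a period). First I would recall the standard facts: $f$ is $C^2$ and bounded below on $\Lambda_{k_1,k_2T_0}$ (the quadratic term dominates, since $V$ is bounded on $\R\times S^1$), it satisfies the Palais--Smale condition (the sublevels are bounded in $H^1$ and one extracts a weakly convergent subsequence; coercivity plus compactness of the embedding into $C^0$ on a period gives strong convergence), and its critical points are exactly the classical $k_2T_0$-periodic solutions making $k_1$ windings, i.e. the fundamental solutions for $\rho=k_1/(k_2T_0)$. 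Because the equation is non-resonant, every such critical point is non-degenerate, so by a standard Sard-type / properness argument there are only finitely many of them, say $N$, each with a well-defined finite Morse index.

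The heart of the "exactly $2r$" count is a Morse-theoretic parity argument exploiting the topology of $\Lambda_{k_1,k_2T_0}$. The key step is to identify the homotopy type of $\Lambda_{k_1,k_2T_0}$ with that of $S^1$ (the loop-with-winding space on the circle deformation-retracts onto the constant-speed winding loops, a circle's worth of them, parametrized by the initial phase). Then the Morse inequalities / Morse relations for the (bounded-below, Palais--Smale) functional $f$ read
\begin{equation}
\sum_{j} m_j t^j = P_t\big(\Lambda_{k_1,k_2T_0}\big) + (1+t)Q(t) = (1+t) + (1+t)Q(t),
\end{equation}
where $m_j$ is the number of critical points of index $j$ and $Q$ is a polynomial with non-negative integer coefficients. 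Evaluating at $t=-1$ gives $\sum_j(-1)^j m_j = 0$, i.e. the Euler characteristic of $\Lambda_{k_1,k_2T_0}$ vanishes, so $N=\sum_j m_j$ is even, $N=2r$. To get $r>0$ (existence) one uses that $P_t(\Lambda_{k_1,k_2T_0})=1+t\neq 0$: the right-hand side of the Morse relation cannot be identically zero, hence there is at least one critical point, in fact the polynomial identity forces at least two. This handles the fundamental solutions.

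For the infinitely many non-fundamental solutions I would run the same machinery in each space $\Lambda_{mk_1,mk_2T_0}$, $m\ge 2$: each contains at least $2r_m>0$ critical points, which are periodic solutions with the same rotation frequency $\rho=k_1/(k_2T_0)$. The point is that infinitely many of these are genuinely new: a solution lying in $\Lambda_{mk_1,mk_2T_0}$ but not in $\Lambda_{k_1,k_2T_0}$ is non-fundamental, and — this is where the main obstacle lies — one must rule out the possibility that all critical points at level $m$ are just iterates (re-parametrizations over $m$ periods) of the finitely many fundamental solutions. Here I expect to invoke a non-degeneracy / index-growth argument: the iterate of a non-degenerate $k_2T_0$-periodic solution as an $mk_2T_0$-periodic orbit has Morse index growing linearly in $m$ (this is the classical iteration theory of the index for the second variation of the action, cf. the Bott-type formulas), while the topology of $\Lambda_{mk_1,mk_2T_0}$ (still a circle, $P_t=1+t$) forces critical points of low index ($0$ and $1$) to persist for all $m$; comparing these two facts produces, for large $m$, critical points that cannot be iterates, hence genuine non-fundamental solutions, and letting $m\to\infty$ yields infinitely many. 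Controlling this iteration-of-index bookkeeping carefully, and making sure the non-resonance of (\ref{equation1}) indeed guarantees non-degeneracy of all iterates (so Morse theory applies in every $\Lambda_{mk_1,mk_2T_0}$), is the step I expect to require the most care.
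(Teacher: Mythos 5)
Your proposal is correct and follows essentially the same route as the paper: compute $\mathcal{P}_\lambda(H^1_{k_1,k_2T_0})=1+\lambda$ from the free loop space of $S^1$, feed this into the Morse relations (with (PS) and the non-resonance hypothesis guaranteeing finitely many non-degenerate critical points) to get an even, positive count of fundamental solutions, and then use the linear growth of the Morse index under iteration — Proposition \ref{propTwist}(iii) in the paper, the Bott iteration estimate in your description — to show that for $p$ large the index-$1$ critical point in $H^1_{pk_1,pk_2T_0}$, which the Morse relations force to exist, cannot lie in $H^1_{k_1,k_2T_0}$ and is therefore non-fundamental. The only cosmetic difference is that you specialize the Morse relation at $\lambda=-1$ (getting $\sum(-1)^jm_j=0$ and hence parity) whereas the paper specializes at $\lambda=1$, which delivers parity and $r>0$ in one stroke.
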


Clearly any $k_2T_0$-periodic solution $x(t)$ is also a
$mk_2T_0$-periodic solution, $m\in\N$; thus $x(t)$ is a critical point of the
functional (\ref{functional}) with $T=k_2T_0$  and $T=mk_2T_0$
respectively, and the Morse index $m(x,T)$ is well defined for
such values of $T$. Given a periodic orbit $x(t)$, we define the twisting frequency (also called the twisting number or the mean index) $\tau$ as the
mean Morse index, i.e.
$\lim\limits_{T\rightarrow \infty} \frac{m(x,T)}{T}.$

The second result of this paper concerns the number of 
non-fundamental solutions in $C^2_{pk_1, pk_2T_0}$ with
$p$ prime.

We introduce two function $\nu(\tau, \rho)$ and $\eta(\tau, \rho)$, that are 
related to
the number of fundamental solutions with twisting frequency less than $\tau$ and
rotation frequency $\rho$. The value of such functions permits to give a 
lower bound on the number of non-fundamental orbits with rotation frequency $\rho$ in 
$C^2_{pk_1, pk_2T_0}$.

In order to state the main theorem we need to to classify the periodic
orbits in two classes, the
class $\alpha$ of periodic orbits with even Morse index and the
class $\beta$ with odd Morse index.

As we will see, the $\alpha$-periodic orbits are those  with
distinct
positive Floquet multipliers while the $\beta$-periodic are those 
possessing negative or complex Floquet multipliers. We
set for any $\tau \in \mathbb{R}^+$ and $\rho \in \frac{1}{T_0}\mathbb{Q}$

\begin{eqnarray*}
n_\alpha(\tau,\rho)&=&
\left\{
\begin{array}{c}
\text{number of fundamental solutions } x \text{ of type }\alpha
\text{ with }\\
\tau(x) =\tau ;\ \rho(x)=\rho
\end{array}
\right\} \\
n_\beta(\tau,\rho)&=&
\left\{
\begin{array}{c}
\text{number of fundamental solutions } x \text{ of type }\beta
\text{ with }\\
\tau(x) =\tau ;\ \rho(x)=\rho
\end{array}
\right\}
\end{eqnarray*}
we can define the function
\begin{displaymath}
\chi (\tau,\rho) :=n_\alpha(\tau,\rho)-n_\beta(\tau,\rho)
\end{displaymath}
and the multiplicity functions
\begin{eqnarray*}
\nu (\tau,\rho) :=\sum_{\zeta \leq \tau }\chi (\zeta,\rho);&&
\eta (\tau,\rho) :=\sum_{\zeta < \tau }\chi (\zeta,\rho).
\end{eqnarray*}

By this functions we can prove the second results of this paper

\begin{theorem}
\label{maintheorem}

Let $\rho = k_1/(k_2T_0)$ with $k_1$ and $k_2$ coprime, $p$ prime, and
assume that eq.(\ref{equation1}) is non-resonant.

Then, the periodic solutions in $C^2_{pk_1,pk_2T_0}$
having
rotation frequency $\rho$, and Morse index $2n$ are of type $\alpha$ and are
at least 
$$\nu \left( \frac{2n}{pk_2T_0}, \rho \right) \mod p.$$

Moreover, the periodic
solutions in $C^2_{pk_1, pk_2T_0}$
having rotation frequency $\rho$, and Morse index $2n+1$ are of type $\beta$
and are at least
$$ - \eta \left( \frac{2n+2}{pk_2T_{0}}, \rho \right) \ \mod p . $$
\end{theorem}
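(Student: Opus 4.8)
The plan is to exploit the $\mathbb{Z}/p\mathbb{Z}$-symmetry acting on the loop space $\Lambda_p := C^2_{pk_1,pk_2T_0}$ by the time-shift $\sigma\colon x(t)\mapsto x(t+k_2T_0)$, which is a free action away from the fixed-point set, the latter consisting exactly of the $k_2T_0$-periodic loops, i.e. the image of the inclusion $C^2_{k_1,k_2T_0}\hookrightarrow\Lambda_p$. The functional $f$ in (\ref{functional}) with $T=pk_2T_0$ is $\sigma$-invariant, so its critical points split into $\sigma$-fixed ones (the fundamental solutions, counted by $n_\alpha,n_\beta$ via their twisting and rotation frequencies) and free $\sigma$-orbits of size exactly $p$ (the non-fundamental solutions lying in $\Lambda_p$; note $p$ prime forces orbit size $1$ or $p$). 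The arithmetic ``$\!\!\mod p$'' in the statement is the signature of an equivariant Morse-theoretic count: contributions of free orbits come in multiples of $p$, so modulo $p$ only the fixed contributions and the ``defect'' between sublevel sets survive.

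The key steps, in order: (i) set up the Morse inequalities for $f$ on a suitable interval of sublevel sets $\{f\le b\}\setminus\{f< a\}$, using that non-resonance makes every critical point non-degenerate with a well-defined Morse index; by the index-iteration / Bott formula the Morse index of a fundamental solution $x$ at period $pk_2T_0$ is asymptotically $\tau(x)\cdot pk_2T_0$, which is why the thresholds $2n/(pk_2T_0)$ and $(2n+2)/(pk_2T_0)$ appear — a fundamental solution contributes to index-$\le 2n$ data precisely when its twisting frequency is (essentially) $\le 2n/(pk_2T_0)$. (ii) Identify that $\alpha$-type orbits (distinct positive Floquet multipliers) keep even index under iteration while $\beta$-type orbits (negative or complex multipliers) pick up an odd contribution, so the even-index critical points at period $pk_2T_0$ are built from $\alpha$-fundamentals plus some non-fundamentals, and the odd-index ones from $\beta$-fundamentals plus non-fundamentals. (iii) Pass to the $\sigma$-equivariant (Borel) cohomology, or equivalently reduce mod $p$ the ordinary Morse counts: the Euler-characteristic-type alternating sum of fixed contributions over the relevant index range is exactly $\nu$ (for the even case) and $-\eta$ (for the odd case), by the definitions $\nu=\sum_{\zeta\le\tau}\chi$, $\eta=\sum_{\zeta<\tau}\chi$, $\chi=n_\alpha-n_\beta$. (iv) Conclude: since free orbits contribute $0\bmod p$ to this alternating count, the number of non-fundamental (hence, by the parity dichotomy, $\alpha$-type with index $2n$, resp. $\beta$-type with index $2n+1$) solutions is at least $\nu(2n/(pk_2T_0),\rho)\bmod p$, resp. $-\eta((2n+2)/(pk_2T_0),\rho)\bmod p$.

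The main obstacle I expect is step (i)–(ii): making the index bookkeeping precise. One must control, via the Conley–Zehnder/Bott iteration formula, how the Morse index of a given fundamental solution grows from period $k_2T_0$ to $pk_2T_0$, and show that the correspondence between ``twisting frequency $\le \tau$'' and ``index at the large period $\le 2n$'' is exact enough (no off-by-one losses) that the thresholds $2n/(pk_2T_0)$ and $(2n+2)/(pk_2T_0)$ in $\nu$ and $\eta$ are the correct ones; the $+2$ shift in the $\eta$-threshold is a genuine feature reflecting that an odd index $2n+1$ at period $pk_2T_0$ can arise from a $\beta$-fundamental whose own index lies just below $2n+2$. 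A secondary technical point is the compactness/Palais–Smale condition for $f$ on $\Lambda_p$ needed to run Morse theory at all, and the (by-now standard) identification of the equivariant cohomology of the relevant sublevel-set pair so that the free-orbit contribution is visibly divisible by $p$; I would cite the subharmonic-orbit literature (\cite{Sal92}, \cite{Lon00}, \cite{Ser00}) for the analytic framework and concentrate the new work on the $\alpha/\beta$ parity analysis and the resulting mod-$p$ count.
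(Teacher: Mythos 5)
Your plan is essentially the paper's argument, and the two key ideas you isolate are exactly the right ones: (a) the shift $x(t)\mapsto x(t+k_2T_0)$ generates a $\Z/p\Z$-action under which non-fundamental orbits lie in free orbits of size exactly $p$ (this is the paper's Proposition~\ref{basicprop}), so the Morse counts reduce mod $p$ to counts of fundamental solutions; and (b) Bott's iteration formula (Proposition~\ref{propTwist}) turns the twisting frequency $\tau$ into the asymptotic Morse index, with the parity of the index distinguishing $\alpha$- from $\beta$-orbits (Proposition~\ref{propmaslov}), which is where the thresholds $2n/(pk_2T_0)$ and $(2n+2)/(pk_2T_0)$ come from.

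A few places where the paper is sharper than your sketch and where you would need to be more careful. First, the paper does not pass to equivariant (Borel) cohomology: the ``free orbits contribute $0$ mod $p$'' step is completely elementary, just the observation that the $p$ time-shifts of a non-fundamental solution are distinct classical solutions with the same Morse index; no equivariant machinery is invoked, and the two routes are not automatically interchangeable. Second, the paper runs the Morse \emph{relations} (the exact identity $\sum_j a_j\lambda^j=(1+\lambda)\sum_j q_j\lambda^j$), not Morse inequalities, and on the entire connected component $H^1_{pk_1,pk_2T_0}$ (whose Poincar\'e polynomial is $1+\lambda$ by Lemma~\ref{basiclemma}), not on a sublevel-set pair; the exactness of the relation and the explicit recursion $q_j=a_j-a_{j-1}+\cdots$ are what let one compute $[q_{2n}]=[\nu(2n/(pT_0))]$ and $[q_{2n+1}]=[-\eta((2n+2)/(pT_0))]$ mod $p$ and then deduce $a_j\geq q_j\geq$ the mod-$p$ residue. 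Third, a small slip: the theorem bounds from below the number of \emph{all} solutions in $C^2_{pk_1,pk_2T_0}$ of the given index, not just the non-fundamental ones; non-fundamentality is only guaranteed when $\tau$ is chosen away from the twisting numbers of the (finitely many) fundamental solutions, as the paper remarks after the proof. With these points made precise, your plan matches the paper's proof.
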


As a consequence, we have the following corollary.

\begin{cor}\label{coroll}
We set
\begin{equation}
\Sigma=\overline{\{(\tau,\rho)\ :\ \nu(\tau,\rho)\neq0\}}.
\end{equation}
Then, for any $(\tau,\rho) \in \Sigma$, there exists a sequence of non-fundamental solutions $\{x_n\}$ of type $\alpha$ and
a sequence of solution $\{y_n\}$ of type $\beta$ such that
\begin{eqnarray*}
\rho(x_n)\rightarrow \rho&&\tau(x_n)\rightarrow \tau\\
\rho(y_n)\rightarrow \rho&&\tau(y_n)\rightarrow \tau
\end{eqnarray*}
\end{cor}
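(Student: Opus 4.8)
The plan is to derive the corollary from Theorem \ref{maintheorem} by a pigeonhole argument applied along a sequence of primes. Fix $(\tau,\rho)\in\Sigma$ with $\rho=k_1/(k_2T_0)$, $k_1,k_2$ coprime. By definition of $\Sigma$ and of $\nu$, there are values $(\tau',\rho')$ arbitrarily close to $(\tau,\rho)$ with $\nu(\tau',\rho')\neq 0$; since $\nu(\cdot,\rho')$ is an integer-valued step function that only jumps at twisting frequencies of actual fundamental solutions (of which there are finitely many below any bound, by Theorem \ref{primoteo} and compactness arguments implicit in the Morse-theoretic setup), we may in fact fix a single rational $\rho'$ near $\rho$ and a value $\tau'$ with $\nu(\tau',\rho')=N\neq 0$. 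Write $\rho'=k_1'/(k_2'T_0)$ in lowest terms.

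First I would translate the conclusion of Theorem \ref{maintheorem} into a statement about non-fundamental solutions. For $p$ prime, the solutions in $C^2_{pk_1',pk_2'T_0}$ of rotation frequency $\rho'$ and Morse index $2n$ number at least $\nu\!\left(\tfrac{2n}{pk_2'T_0},\rho'\right)\bmod p$; choose $n=n(p)$ so that $\tfrac{2n}{pk_2'T_0}$ falls in the interval on which $\nu(\cdot,\rho')=N$ (this is possible for all large $p$ because the mesh $\tfrac{2}{pk_2'T_0}\to 0$). Then that Morse-index level contains at least $N\bmod p$ solutions of rotation frequency $\rho'$ lying in $C^2_{pk_1',pk_2'T_0}$. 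For $p$ prime and $p\nmid N$ we have $N\bmod p\neq 0$, so there is genuinely at least one such solution $x_p$. The key point is that $x_p$ must be \emph{non-fundamental}: a fundamental solution of rotation frequency $\rho'$ lives in $C^2_{k_1',k_2'T_0}$, and for all large $p$ its Morse index $m(x,k_2'T_0)$ is too small to equal the chosen $2n(p)$ — equivalently, its twisting frequency is bounded, while $\tfrac{2n(p)}{pk_2'T_0}\to\tau'$, and one checks the Morse index at level $pk_2'T_0$ of a fundamental orbit grows like $\tau(x)\,pk_2'T_0$, which for $p$ large cannot hit the prescribed index unless $\tau(x)$ is near $\tau'$; since there are only finitely many fundamental orbits, for $p$ large enough none of them qualifies, forcing $x_p$ to be non-fundamental of type $\alpha$. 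The same argument using the second half of Theorem \ref{maintheorem} and $-\eta\!\left(\tfrac{2n+2}{pk_2'T_0},\rho'\right)\bmod p$ — noting $-\eta$ agrees with $N$ on the relevant interval up to the jump conventions — produces non-fundamental solutions $y_p$ of type $\beta$ at an odd Morse-index level.

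It remains to extract the convergent sequences. Running the above over the infinitely many primes $p$ not dividing $N$, and letting $\rho'\to\rho$, $\tau'\to\tau$ through a diagonal choice (take $\rho'_j\to\rho$, $\tau'_j\to\tau$ with $\nu(\tau'_j,\rho'_j)\neq 0$, and for each $j$ pick one admissible prime $p_j$, relabeling the resulting solutions $x_n:=x_{p_j}$, $y_n:=y_{p_j}$), we obtain sequences with $\rho(x_n)=\rho'_j\to\rho$ and $\tau(x_n)=\tfrac{2n(p_j)}{p_jk_2'(j)T_0}\to\tau$ (by the choice placing it in the plateau interval of width $\to 0$ around $\tau'_j$), and likewise for $y_n$. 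The main obstacle I anticipate is the bookkeeping in the last step: one must ensure the twisting frequency $\tau(x_n)$, defined as the genuine mean Morse index $\lim_{T\to\infty} m(x_n,T)/T$ of the orbit $x_n$, actually coincides (in the limit) with the discrete quantity $\tfrac{2n(p_j)}{p_jk_2'(j)T_0}$ used to locate it — this requires the relation between the Morse index of a $pk_2T_0$-periodic solution viewed at its own period and its mean index, i.e. that $m(x_p, pk_2'T_0)/(pk_2'T_0)\to\tau(x_p)$ uniformly enough as $p\to\infty$, which should follow from the Bott-type iteration formula underlying the definition of $\tau$; I would isolate this as a lemma and cite the iteration-of-Morse-index machinery already used for Theorem \ref{maintheorem}.
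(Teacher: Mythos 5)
Your proposal follows essentially the same route as the paper: the paper first proves an unnamed intermediate corollary (fixing a rational $\rho$ with $\nu(\tau,\rho)\neq 0$ and producing non-fundamental $\alpha$- and $\beta$-orbits with twisting frequency $\to\tau$ via Theorem \ref{maintheorem} applied for large primes $p$, using Proposition \ref{eta} for the plateau argument), and then Corollary \ref{coroll} itself is just a diagonal argument over sequences $(\tau_k,\rho_k)\to(\tau,\rho)$ with $\nu(\tau_k,\rho_k)\neq 0$. You have reproduced this chain inline, including the choice of $p$ coprime to the count and the argument excluding fundamental orbits.

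The one thing you flag as an "obstacle" at the end is not actually a gap that needs an extra lemma: for a $pk_2T_0$-periodic solution of type $\alpha$ with Morse index $2n$, the twisting frequency is \emph{exactly} $\tau(x)=\tfrac{2n}{pk_2T_0}$ (this is the content of the remark after Proposition \ref{propTwist}: since $j(T,\sigma)$ is constant equal to $2n$ on $S^1$ for type-$\alpha$ orbits, the mean index equals the Morse index divided by the period). For type $\beta$ with Morse index $2n+1$, one has $\tau(x)\in\left(\tfrac{2n}{pk_2T_0},\tfrac{2n+2}{pk_2T_0}\right)$, an interval of width $\tfrac{2}{pk_2T_0}\to 0$, so $\tau(y_p)\to\tau'$ is automatic. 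No asymptotic Bott-iteration lemma is required; the convergence of the actual twisting frequencies is built into the classification by parity of the Morse index.
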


\section{The Morse relations}

In order to obtain some estimates on the number of critical point,
we must recall some features of Morse theory. After
a short summary of the main results, we show some preliminary lemma useful
to apply Morse theory to our framework. For an exhaustive treatment of
Morse theory, and for the proofs of the results here
collected the reader can check \cite{Bot82}, \cite{Pal63},
\cite{Ben91,BG94,Ben95}.
\begin{definition}
Let $M$ a $C^2$ complete differential manifold and
let $f\in C^2(M,\R)$ function. Let $x\in M$ a critical point of $f$.
Suppose that $x$ is non-degenerate, i.e. the Hessian determinant
does not vanish in $x$.

Then the Morse index $m(x)$ is the signature of the Hessian of $f$ at $x$
\end{definition}

By this definition it is possible to prove the following theorem.

\begin{theorem}
Let $M$ be a complete $C^2$ Riemannian manifold, $f\in C^2(M,\R)$.
Set
\begin{eqnarray}
f^b&=&\{x\in M\ :\ f(x)\leq b\};\\
f_a^b&=&\{x\in M\ :\ a\leq f(x)\leq b\};
\end{eqnarray}
let $c\in\R$ be
the unique critical level in the interval $[a,b]$. Suppose that the
critical points in $f^{-1}(c)$ are non-degenerate
and suppose that
$f_a^b$ is a compact set in $M$. If there are $n$ critical
points of index $q$ at level $c$, then
\begin{equation}
\dim H_q(f^b,f^a)=n,
\end{equation}
where $H_*(X,Y)$ is the $\Z$ singular homology of the couple.
\end{theorem}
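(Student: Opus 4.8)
The plan is the classical one: I would reduce the computation of $H_q(f^b,f^a)$ to a purely local computation near the finitely many critical points sitting at the level $c$, by first deforming away the non-critical part of the interval $[a,b]$ with the negative gradient flow, and then reading off the local contribution of each critical point from the Morse lemma.

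First I would isolate a thin slab around $c$. Since $c$ is the only critical value in $[a,b]$, the half-open intervals $[a,c)$ and $(c,b]$ contain no critical values, so fix $\eps>0$ with $a<c-\eps<c+\eps<b$. On $\{c+\eps\le f\le b\}$, which is a closed subset of the compact set $f_a^b$ and hence compact, we have $\nabla f\neq 0$, so $\|\nabla f\|\ge\delta>0$ there; flowing along $-\nabla f/\|\nabla f\|^2$ then decreases $f$ at unit rate, and the orbit stays in this compact slab until it reaches the level $c+\eps$, so the flow is defined for as long as it is needed. This exhibits $f^{c+\eps}$ as a strong deformation retract of $f^b$, and by the symmetric argument $f^{c-\eps}$ as a strong deformation retract of $f^a$. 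Feeding the inclusion of pairs $(f^{c+\eps},f^{c-\eps})\hookrightarrow(f^b,f^a)$ into the two long exact sequences and applying the five lemma then gives $H_*(f^b,f^a)\cong H_*(f^{c+\eps},f^{c-\eps})$.

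Next I would localize near the critical set. Non-degeneracy forces the critical points in $f^{-1}(c)$ to be isolated, and since they lie in the compact set $f_a^b$ there are only finitely many of them, say $p_1,\dots,p_N$, with Morse indices $\lambda_1,\dots,\lambda_N$. By the Morse lemma choose disjoint coordinate balls $U_i\ni p_i$ on which $f=c-\|x^-\|^2+\|x^+\|^2$ with $\dim x^-=\lambda_i$, and run the gradient deformation once more away from $\bigcup_i U_i$: this shows that, up to homotopy, $f^{c+\eps}$ is obtained from $f^{c-\eps}$ by attaching, for each $p_i$, a single cell of dimension $\lambda_i$. Consequently $H_*(f^{c+\eps},f^{c-\eps})\cong\bigoplus_{i=1}^N H_*(D^{\lambda_i},\partial D^{\lambda_i})$, and since $H_k(D^{\lambda_i},\partial D^{\lambda_i})$ is $\Z$ for $k=\lambda_i$ and $0$ otherwise, the $q$-th relative homology receives exactly one copy of $\Z$ from each critical point of index $q$ and nothing from the others. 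Hence $\dim H_q(f^b,f^a)=\#\{i:\lambda_i=q\}=n$.

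The step I expect to be the main obstacle is the first one: on a non-compact manifold the negative gradient flow need not be complete and may fail to push a given sublevel set down to a prescribed lower level in finite time, so the deformation retractions used above are not automatic. The compactness hypothesis on $f_a^b$ is precisely the ingredient that repairs this — it confines the relevant orbits to a compact set and yields the uniform bound $\|\nabla f\|\ge\delta$ off the level $c$ — after which everything reduces to the standard deformation-lemma / Morse-lemma / cell-attachment package of finite-dimensional Morse theory; see \cite{Bot82,Pal63}.
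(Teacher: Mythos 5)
The paper does not actually prove this theorem: it is listed as a recalled result and the proof is deferred to \cite{Bot82}, \cite{Pal63}, \cite{Ben91,BG94,Ben95}. There is therefore no in-paper argument to compare against, and your task is to supply a standard Morse-theory proof, which you essentially do. The overall structure --- deform away from the regular part of $[a,b]$ with a normalized negative gradient flow, then apply the Morse lemma and a cell-attachment argument near the finitely many critical points at level $c$ --- is correct, and you rightly isolate the role of compactness of $f_a^b$ in producing the lower bound $\|\nabla f\|\ge\delta>0$ off the critical level so that the truncated flow is defined for the finite time needed.

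One slip should be corrected. Since $a<c-\eps$, the inclusion runs $f^a\subset f^{c-\eps}$, so the flow retracts $f^{c-\eps}$ \emph{onto} $f^a$, not $f^a$ onto $f^{c-\eps}$ as you wrote; consequently $(f^{c+\eps},f^{c-\eps})$ is not a subpair of $(f^b,f^a)$ and the ``inclusion of pairs'' you feed into the five lemma does not exist. The repair is routine and preserves your strategy: first retract $f^b$ onto $f^{c+\eps}$ rel $f^a$ to get $H_*(f^b,f^a)\cong H_*(f^{c+\eps},f^a)$; then pass from $(f^{c+\eps},f^a)$ to $(f^{c+\eps},f^{c-\eps})$ either by the five lemma applied to the genuine subpair $(f^{c+\eps},f^a)\hookrightarrow(f^{c+\eps},f^{c-\eps})$ together with the retraction of $f^{c-\eps}$ onto $f^a$, or by the exact sequence of the triple $(f^{c+\eps},f^{c-\eps},f^a)$, in which $H_*(f^{c-\eps},f^a)=0$. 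The rest of your argument --- finitely many nondegenerate critical points at level $c$, disjoint Morse charts, cell attachment, and the excision computation $H_*(f^{c+\eps},f^{c-\eps})\cong\bigoplus_i H_*(D^{\lambda_i},\partial D^{\lambda_i})$ giving $\dim H_q=\#\{i:\lambda_i=q\}=n$ --- is the classical package and is fine.
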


We must introduce now
the Poincar\'e polynomial; this algebraic tool allows us to formulate
the main theorem of this paragraph.
\begin{definition}
Let $(X,A)$ be a topological pair. Then the
Poincar\'e polynomial ${\cal P}_\lambda(X,A)$ is the formal series
in the $\lambda$ variable
with non negative integer coefficients (maybe infinite) defined by
\begin{equation}
{\cal P}_\lambda(X,A)=\sum_{q\in\N}\dim H_q(X,A){\lambda}^q.
\end{equation}
Moreover ${\cal P}_\lambda(X):={\cal P}_\lambda(X,\emptyset)$.
\end{definition}

At last, we can state the so called {\em Morse relations}, that are useful
to estimate the number of critical points of a function.

\begin{theorem}[Morse relations]
Let $M$ be a complete $C^2$ Riemannian
manifold, $f\in C^2(M,\R)$, and let $a,b$ be two regular
values of $f$. If $f_a^b$ is compact and all the critical points are
nondegenerate, then
\begin{equation}
\sum_{x\text{critical in }  f_a^b}\lambda^{m(x)}=
{\cal P}_\lambda(f^a,f^b)+\left(1+\lambda\right){\cal Q}_\lambda.
\end{equation}
If also $M$ is compact, then
\begin{equation}
\label{morserelationsclass}
\sum_{x\text{critical}}\lambda^{m(x)}=
{\cal P}_\lambda(M)+(1+\lambda) {\cal Q}_\lambda.
\end{equation}
where $m(x)$ is the Morse index of $f$ at $x$, and
${\cal Q}_\lambda$ is a formal series with non negative integer coefficients.
\end{theorem}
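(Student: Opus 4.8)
\emph{Proof plan.} The strategy is to deduce the global identity from the local theorem just stated — the one which computes $\dim H_q(f^b,f^a)$ as the number of index-$q$ critical points whenever $[a,b]$ encloses a single critical level — by gluing together the finitely many critical levels with the exact homology sequences of pairs and triples. First I would observe that, $f_a^b$ being compact and every critical point non-degenerate, there are only finitely many critical points in $f_a^b$: a non-degenerate critical point is isolated (Morse lemma), so an infinite critical set would accumulate at a point of the compact set $f_a^b$ which, as a limit of critical points, would again be a non-isolated critical point, a contradiction. Hence the critical values in $[a,b]$ form a finite set $a<c_1<\dots<c_N<b$, and one may choose regular values $a=b_0<b_1<\dots<b_N=b$ with $c_i\in(b_{i-1},b_i)$. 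Each slice $f_{b_{i-1}}^{b_i}$ is closed in the compact set $f_a^b$, hence compact, and has $c_i$ as its only critical level, so the local theorem gives
\[
{\cal P}_\lambda\bigl(f^{b_i},f^{b_{i-1}}\bigr)=\sum_{x:\,f(x)=c_i}\lambda^{m(x)},\qquad i=1,\dots,N,
\]
and since $a,b$ are regular the critical points in $f_a^b$ are exactly those at the levels $c_i$, so summing over $i$ the right-hand sides add up to $\sum_{x\ \text{critical in }f_a^b}\lambda^{m(x)}$.

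Secondly I would establish the sub-additivity lemma: for any triple $Z\subset Y\subset X$ of spaces,
\[
{\cal P}_\lambda(Y,Z)+{\cal P}_\lambda(X,Y)={\cal P}_\lambda(X,Z)+(1+\lambda)\,{\cal Q}_\lambda,
\]
with ${\cal Q}_\lambda$ a formal series with non-negative integer coefficients (the case of a pair being $Z=\emptyset$). This comes from the long exact homology sequence of the triple: writing $\kappa_q$ for the dimension of the kernel of $H_q(Y,Z)\to H_q(X,Z)$ and applying rank–nullity degree by degree, exactness forces $\dim H_q(Y,Z)+\dim H_q(X,Y)-\dim H_q(X,Z)=\kappa_q+\kappa_{q-1}$, so one takes ${\cal Q}_\lambda=\sum_q\kappa_q\lambda^q$.

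Thirdly I would iterate the lemma along the filtration $f^a=f^{b_0}\subset f^{b_1}\subset\dots\subset f^{b_N}=f^b$, by induction on $N$, applying it at each step to the triple $f^{b_0}\subset f^{b_{N-1}}\subset f^{b_N}$; since the correction terms accumulate additively and a sum of formal series with non-negative coefficients is again of that type, this yields
\[
\sum_{i=1}^{N}{\cal P}_\lambda\bigl(f^{b_i},f^{b_{i-1}}\bigr)={\cal P}_\lambda(f^b,f^a)+(1+\lambda)\,{\cal Q}_\lambda
\]
with ${\cal Q}_\lambda$ having non-negative integer coefficients (here ${\cal P}_\lambda(f^b,f^a)$ is the Poincar\'e polynomial of the pair $(f^b,f^a)$, with $f^a\subseteq f^b$ since $a\le b$). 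Combined with the first step this is the first assertion. For the compact case one chooses $a<\min_M f$ and $b>\max_M f$, so that $f^a=\emptyset$, $f^b=f_a^b=M$ and ${\cal P}_\lambda(f^b,f^a)={\cal P}_\lambda(M)$, giving (\ref{morserelationsclass}).

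I do not expect a deep obstacle here; the main care lies in the bookkeeping. One must check that the non-negativity of the coefficients of ${\cal Q}_\lambda$ genuinely survives the induction, and one must work throughout with formal power series with non-negative (possibly infinite) integer coefficients, since for non-compact $M$ the homology of a sublevel set need not be finitely generated, so the Poincar\'e ``polynomials'' are then honest infinite series and the termwise additions and rank–nullity counts must be justified in that setting. One should also verify that the regularity of $a$ and $b$ together with the compactness of the sublevel slices $f_{b_{i-1}}^{b_i}$ really place us within the hypotheses of the local theorem and make the pair and triple long exact sequences available.
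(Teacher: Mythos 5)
The paper itself does not prove this theorem: it is stated as a recalled result with proofs delegated to the cited references (Bott, Palais, Benci--Giannoni, etc.), so there is no in-paper argument to compare yours against. Your proof is the standard one and is correct: isolate finitely many critical levels in $f_a^b$, compute each slab $f_{b_{i-1}}^{b_i}$ with the local deformation/handle theorem, then assemble via the subadditivity lemma coming from the long exact sequence of the triple, and recover the compact case by choosing $a<\min f$, $b>\max f$. The rank--nullity computation in the triple sequence is done correctly, and the non-negativity of $\mathcal{Q}_\lambda$ indeed survives the induction because you are adding power series with non-negative coefficients.

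Two small remarks that would sharpen the write-up. First, the paper's display writes $\mathcal{P}_\lambda(f^a,f^b)$, but since $f^a\subseteq f^b$ the pair should read $(f^b,f^a)$; you silently (and correctly) use $\mathcal{P}_\lambda(f^b,f^a)$, and it would be worth noting that this is what is meant. Second, the paper works with $\Z$ singular homology, so in the exact-sequence bookkeeping ``$\dim$'' should be read as \emph{rank} of an abelian group (which is still additive on short exact sequences); alternatively one passes to field coefficients. Finally, your closing caution about possibly infinite-dimensional homology is resolvable in this setting: the local theorem gives each slab finite-rank relative homology, and the subadditivity lemma then propagates finiteness of $\operatorname{rank}H_q(f^{b_i},f^{b_0})$ along the filtration, so all the coefficients entering the computation are in fact finite.
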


If $f^b_a$ is not compact, the above theorem is no longer valid.
This assumption can be substituted with the Palais Smale compactness
condition, recalled below. This condition permits to extend Morse relations
when, as in our case, $M$ is a non compact infinite dimensional manifold.
\begin{definition}
Let $H$ be an Hilbert space; $f\in C^1(H,\R)$ satisfies the $(PS)_c$ condition
iff every sequence $\{u_h\}_h\subset H$ s.t.
\begin{eqnarray*}
&&||\nabla f(u_h)||\rightarrow0;\\
&&f(u_h)\rightarrow c,
\end{eqnarray*}
is relatively compact in $H$.
\end{definition}

\section{Variational settings}

Now let us consider the dynamical system defined by equation
(\ref{equation1}):
\begin{equation*}
\ddot{x}+V'_{x}(t,x)= 0
\end{equation*}
where $x \in S^1$, $V \in C^2(\mathbb{R} \times S^1)$ and $V'$
denotes the derivative
of $V$ with respect to $x$. We suppose that $V(t,\cdot)$ is $T_0$-periodic.\\
We introduce three different spaces:
\begin{displaymath}
H^1_{k_2T_0} = \{ x: x \in H^1_{loc}(\mathbb{R},S^1), \ x(t+k_2T_0)=x(t)\},
\end{displaymath}
the Hilbert space of all the periodic orbits with period $k_2T_0$,
equipped with the following scalar product
\begin{displaymath}
<u,v>=\frac{1}{k_2T_0}\int_0^{k_2T_0} (\dot{u} \cdot \dot{v}+u \cdot v) dt,
\end{displaymath}
with $u$, $v \in TH^1_{k_2T_0}=H^1_{k_2T_0}$,
\begin{displaymath}
H^1_{0,k_2T_0} = \{ [x]:\ x \in H^1_{loc}(\mathbb{R},\mathbb{R}), \ x(t+k_2T_0)=x(t)\}
\end{displaymath}
the Hilbert space of the periodic orbits with period $k_2T_0$ in the 0-th connect component,
and
\begin{displaymath}
H^1_{k_1,k_2T_0} = \{ [x]:\  x \in
H^1_{loc}(\mathbb{R},\mathbb{R}), \ x(t+k_2T_0)=x(t)+k_1\}
\end{displaymath}
the set of $k_2T_0$ periodic orbits that make $k_1$ windings in
$S^1$, where $[x]=x \mod  1$. We have that
$H^1_{k_1,k_2T_0}$ is an Hilbert affine space. Indeed, given
$x(t) \in H^1_{k_1,k_2T_0}$, there exist $y(t) \in H^1_{0,k_2T_0}$ such that
\begin{equation}\label{bound}
x(t)=\frac{k_1}{k_2T_0}t+y(t).
\end{equation}

We are interested to study the $k_2T_0$-periodic solution of (\ref{equation1}).
The equation (\ref{equation1}) is the
Euler-Lagrange equation corresponding to the functional
\begin{equation}
\label{funzionale}
f(x)=\frac{1}{k_2T_0}\int_{0}^{k_2T_0}(\frac{1}{2}|\dot{x}|^2-V(t,x))dt
\end{equation}
 on $H^1_{k_1,k_2T_0}$ and the $k_2T_0$-periodic solutions of equation (\ref{equation1}) are the
critical point of the functional (\ref{funzionale}). It is well known that
the functional is $C^2$ on $H^1_{k_1,k_2T_0}$ and we can apply the Morse theory
defining a Morse index for every $k_2T_0$-periodic solution of
(\ref{equation1}). If $x(t)$ is a $k_2T_0$-periodic solutions of equation (\ref{equation1}) then
\begin{equation}
\label{funzionale'}
f'(x)[y]=\frac{1}{k_2T_0}\int_{0}^{k_2T_0}(\dot{x}\cdot
\dot{y}-V'(t,x)y)dt=0
\end{equation} for all $y \in
H^1_{0,k_2T_0}$ with $y(0)=0$. The Hessian of the functional $f$ is defined as
\begin{equation}
\label{funzionale''}
f''(x)[y][y]=\frac{1}{k_2T_0}\int_{0}^{k_2T_0}(|\dot{y}|^2-V''(t,x)
y^2)dt
\end{equation}
and the signature of the Hessian at $x(t)$ is given by the number
of negative eigenvalues of (\ref{funzionale''}).
\begin{definition}
We denote $m(x,k_2T_0)$ the Morse index relative to the $k_2T_0$
periodic orbit $x(t)$, i.e. the signature of (\ref{funzionale''}).
\end{definition}
\subsection{Poincar\'e polynomial of the free loop space}

Now we can compute the Poincar\'{e} polynomial of the
$H^1_{k_1,k_2T_0}$, that is the $k_1$-th connected component of
the path space $ H^1_{k_2T_0}$.

We recall some feature of the Poincar\'{e} polynomial that we need
to prove our result. For all the details and for an exhaustive
treatment of the Poincar\'e polynomial
we refer to \cite{Ben91,BG94}.
Here we recall only the following standard result of algebraic topology.
\begin{rem}\label{plom}
Let $(X,A)$ and $(Y,B)$ be two pairs of topological spaces. Then
\begin{enumerate}
\item{if $(X,A)$ and $(Y,B)$ are homotopically equivalent, then
${\cal P}_\lambda(X,A)={\cal P}_\lambda(Y,B)$;}
\item{${\cal P}_\lambda(X\times Y,A\times B)=
{\cal P}_\lambda(X,A)\cdot{\cal P}_\lambda(Y,B)$ (K\"unnet formula);}
\item{if $x_0$ is a
single point then ${\cal P}_\lambda(\{x_0\})=1$; furthermore if $X$ is
topologically trivial even ${\cal P}_\lambda(X)=1$;}
\end{enumerate}
\end{rem}

It is obvious that $H^1_{k_2T_0}\simeq H^1(S^1,S^1)$, and so also
$H^1_{k_1,k_2T_0}\simeq H^1_{k_1}(S^1,S^1)$.
Furthermore,
in order to calculate the Poincar\'{e} polynomial of the path space,
by the Whitney theorem we know that there is an homotopic equivalence
between $H^1(S^1,S^1)$
and $C^0(S^1,S^1)$. This is a standard argument, and can be found, for example,
in \cite{Kli78}.
Thus, we can consider the $k$-th connected component of
$C^0(S^1,S^1)$ that is the
set of continuous maps from $S^1$ to $S^1$ with index $k$
(roughly speaking the curves that "turns" $k$ times around $S^1$).
We note this component as $C^0_k(S^1,S^1)$. We want to show the following Lemma

\begin{lemma}
\label{basiclemma}
For all integer $k_1 \in \mathbb{Z}$, we have that
\begin{displaymath}
{\cal P}_\lambda(H^1_{k_1,k_2T_0})=1+\lambda
\end{displaymath}
\end{lemma}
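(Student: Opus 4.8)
The plan is to reduce the computation of $\mathcal{P}_\lambda(H^1_{k_1,k_2T_0})$ to a known homotopy type via the chain of equivalences already flagged in the excerpt: $H^1_{k_1,k_2T_0}\simeq H^1_{k_1}(S^1,S^1)\simeq C^0_{k_1}(S^1,S^1)$. By item~1 of Remark~\ref{plom} the Poincar\'e polynomial is a homotopy invariant, so it suffices to identify the homotopy type of the connected component $C^0_{k_1}(S^1,S^1)$ of degree-$k_1$ continuous self-maps of the circle, and to do so uniformly in $k_1$.

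\medskip
\noindent\textbf{Step 1: Identify the homotopy type of the free loop space $\Lambda S^1 = C^0(S^1,S^1)$.} Since $S^1$ is a topological group (indeed $S^1=\R/\Z$), the free loop space splits as a product $\Lambda S^1 \cong S^1 \times \Omega_{\mathrm{based}}$ in a way compatible with connected components: write a loop $\gamma:S^1\to S^1$ as $\gamma(s) = \gamma(0)\cdot \bar\gamma(s)$ where $\bar\gamma$ is based at the identity. More concretely, I would use the fibration $\Omega S^1 \to \Lambda S^1 \xrightarrow{\mathrm{ev}_0} S^1$ which, because $S^1$ is a group, is a trivial fibration: $\Lambda S^1 \simeq S^1 \times \Omega S^1$. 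Restricting to the component of degree $k_1$ gives $C^0_{k_1}(S^1,S^1) \simeq S^1 \times \Omega_{k_1} S^1$, where $\Omega_{k_1}S^1$ is the degree-$k_1$ component of the based loop space.

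\medskip
\noindent\textbf{Step 2: Show each component of $\Omega S^1$ is contractible.} The universal cover $\R \to S^1$ induces a homotopy equivalence $\Omega S^1 \simeq \Omega_{\mathbb{Z}}\R$, the space of paths in $\R$ with integer endpoints (starting at $0$). Each such path-space component $\{\sigma:[0,1]\to\R:\sigma(0)=0,\ \sigma(1)=k_1\}$ is convex, hence contractible; equivalently, since $\R$ is contractible, every component of $\Omega\R$ and hence of $\Omega S^1$ is topologically trivial. By item~3 of Remark~\ref{plom}, $\mathcal{P}_\lambda(\Omega_{k_1}S^1)=1$. Note this makes the answer independent of $k_1$, which is exactly the content of the lemma.

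\medskip
\noindent\textbf{Step 3: Apply the K\"unneth formula.} Combining Steps~1 and~2 with item~2 of Remark~\ref{plom}:
\begin{displaymath}
\mathcal{P}_\lambda(H^1_{k_1,k_2T_0}) = \mathcal{P}_\lambda\big(C^0_{k_1}(S^1,S^1)\big) = \mathcal{P}_\lambda(S^1)\cdot \mathcal{P}_\lambda(\Omega_{k_1}S^1) = (1+\lambda)\cdot 1 = 1+\lambda,
\end{displaymath}
using that $\mathcal{P}_\lambda(S^1)=1+\lambda$ since $H_0(S^1)=H_1(S^1)=\Z$ and all higher homology vanishes.

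\medskip
\noindent The main obstacle is making Step~1 precise: one must check that the splitting $\Lambda S^1 \simeq S^1 \times \Omega S^1$ is genuinely a product decomposition respecting path components — i.e. that the degree-$k_1$ component really is $S^1\times(\text{component of }\Omega S^1)$ rather than some twisted bundle over $S^1$. This is where the group structure of $S^1$ is essential (it trivializes the evaluation fibration); for a general target manifold the argument would fail, but here it is clean. Alternatively, and perhaps more economically, one can bypass the product decomposition entirely: deformation-retract $C^0_{k_1}(S^1,S^1)$ directly onto the subspace of maps $s\mapsto k_1 s + c$ (rotations composed with the standard degree-$k_1$ cover), which is a copy of $S^1$ — the retraction being "average out the non-linear part" using the contractibility of $\R$. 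Either route yields $\mathcal{P}_\lambda(H^1_{k_1,k_2T_0}) = 1+\lambda$.
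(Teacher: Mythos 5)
Your argument is correct and follows essentially the same route as the paper: lift to the universal cover (equivalently, use the group structure of $S^1$ to trivialize the evaluation fibration), split off the starting point to get a factor of $S^1$ times a contractible based piece, and apply the K\"unneth formula from Remark~\ref{plom}. The paper phrases the splitting concretely as $C^0_{k_1}(S^1,S^1)\simeq S^1\times\{u\in C^0(\R,\R):u(t+k_2T_0)=u(t)+k_1,\ u(0)=0\}$ with the second factor an affine (hence contractible) space, which is exactly your ``alternative'' deformation-retraction route; the fibration language you lead with is a dressed-up version of the same decomposition.
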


\begin{proof}

We have just said that
$H^1_{k_1,k_2T_0}\simeq H^1_{k_1}(S^1,S^1)\simeq C^0_{k_1}(S^1,S^1)$.
Now it's easy to see that
\begin{eqnarray*}
C^0_{k_1}(S^1,S^1)&\simeq&
\left\{u:
u\in C^0(\R,\R), \ u(t+k_2T_0)=u(t)+k_1 \right\}\simeq
\\
&\simeq&S^1\times
\left\{u:
u\in C^0(\R,\R), \ u(t+k_2T_0)=u(t)+k_1,\ u(0)=0
\right\}.
\end{eqnarray*}
The space $\left\{u:
u\in C^0(\R,\R), \ u(t+k_2T_0)=u(t)+k_1,\ u(0)=0
\right\}$ is an affine space, so it is contractible and its Poincar\'{e} polynomial
is equal to 1.
Then, by the K\"unnet formula we obtain
\begin{equation}
{\cal P}_\lambda(C^0_{k_1}(S^1,S^1))={\cal P}_\lambda(S^1)=1+\lambda,
\end{equation}
that concludes the proof.
\end{proof}

\subsection{The Palais Smale condition}

We show now that the functional
\begin{displaymath}
f(x)=\frac 1{T_0}\int\frac 12 |\dot x|^2-V(t,x)\ dt
\end{displaymath}
defined at the beginning of this section satisfies the Palais Smale condition.
The result is well known because the potential is bounded and we prove it in
the standard way.

\begin{prop}
The functional $f$ satisfies the (PS) condition in $H^1_{k_1,k_2T_0}$
\end{prop}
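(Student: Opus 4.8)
The plan is to verify the Palais--Smale condition directly by exploiting the affine structure of $H^1_{k_1,k_2T_0}$ together with the boundedness of $V$ and its derivatives on the compact set $\R\times S^1$. Recall from (\ref{bound}) that every $x\in H^1_{k_1,k_2T_0}$ decomposes as $x(t)=\frac{k_1}{k_2T_0}t+y(t)$ with $y\in H^1_{0,k_2T_0}$; since the linear part is fixed, the gradient and the coercivity estimates only involve the ``loop part'' $y$, and the Hilbert-space norm of $x$ (as a point of the affine space, with tangent space $H^1_{0,k_2T_0}$) is comparable to $\|y\|_{H^1}$.

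First I would take a sequence $\{x_h\}=\{\frac{k_1}{k_2T_0}t+y_h\}$ with $f(x_h)\to c$ and $\|\nabla f(x_h)\|\to 0$, and establish boundedness in $H^1$. Write $y_h=\xi_h+\tilde y_h$ where $\xi_h=\frac{1}{k_2T_0}\int_0^{k_2T_0}y_h\,dt\in\R$ is the mean and $\tilde y_h$ has zero mean. For the oscillating part I would use the Poincar\'e--Wirtinger inequality $\|\tilde y_h\|_{L^2}\le C\|\dot{\tilde y_h}\|_{L^2}=C\|\dot y_h\|_{L^2}$, so controlling $\|\dot y_h\|_{L^2}$ controls $\|\tilde y_h\|_{H^1}$. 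To get that control, test $\nabla f(x_h)$ against the direction $\tilde y_h$: since $\langle\nabla f(x_h),\tilde y_h\rangle=\frac{1}{k_2T_0}\int_0^{k_2T_0}(\dot x_h\dot{\tilde y_h}-V'(t,x_h)\tilde y_h)\,dt$ and $\dot x_h\dot{\tilde y_h}=\frac{k_1}{k_2T_0}\dot{\tilde y_h}+|\dot y_h|^2$ with the first term integrating to zero, one gets $\|\dot y_h\|_{L^2}^2\le \|\nabla f(x_h)\|\,\|\tilde y_h\| + \|V'\|_\infty\|\tilde y_h\|_{L^1}$; bounding the right-hand side by Poincar\'e--Wirtinger and absorbing gives $\|\dot y_h\|_{L^2}$ bounded. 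The mean $\xi_h$ lives in $S^1=\R/\Z$ and is therefore automatically bounded (this is the key place where working on $S^1$ rather than $\R$ makes the argument clean --- there is no escape to infinity in the target). Hence $\{x_h\}$ is bounded in $H^1_{k_1,k_2T_0}$.

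Next I would extract a weakly convergent subsequence $x_h\rightharpoonup x$ in $H^1$, which by the compact Sobolev embedding $H^1_{k_2T_0}\hookrightarrow C^0$ converges strongly in $C^0$, hence also in $L^2$. Then I would upgrade to strong $H^1$ convergence in the standard way: from $\nabla f(x_h)\to 0$ and $\langle\nabla f(x),\cdot\rangle$ evaluated at the difference, one writes the $H^1$ inner product of $x_h-x$ with itself as a sum of $\langle\nabla f(x_h)-\nabla f(x),x_h-x\rangle$ (which $\to 0$) plus lower-order terms of the form $\int(V'(t,x_h)-V'(t,x))(y_h-y)\,dt$ plus the $L^2$ inner product $\int(y_h-y)^2$; the former $\to 0$ because $V'$ is uniformly continuous on $\R\times S^1$ and $x_h\to x$ uniformly, and the latter $\to 0$ by strong $L^2$ convergence. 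This forces $\|x_h-x\|_{H^1}\to 0$, so $f$ satisfies $(PS)_c$ for every $c$.

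The main obstacle is purely bookkeeping: handling the affine (non-linear, fixed) part $\frac{k_1}{k_2T_0}t$ correctly so that the cross terms it produces in $f$ and in $\nabla f$ either vanish by periodicity or are harmlessly bounded, and making sure the gradient $\nabla f$ with respect to the $H^1$ metric (which differs from the $L^2$-type pairing appearing in (\ref{funzionale'})) is treated consistently --- but since $V,V',V''$ are all bounded on $\R\times S^1$ and the target is compact, there is no genuine analytic difficulty, which is exactly why the authors describe the proof as ``standard.''
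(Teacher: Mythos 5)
Your argument is correct and lands at the same conclusion, but it takes a more roundabout path for the boundedness step. The paper's proof exploits that $V$ is bounded on $\R\times S^1$, so the single hypothesis $f(x_n)\to c$ already controls $\frac{1}{2k_2T_0}\int|\dot x_n|^2 = f(x_n)+\frac{1}{k_2T_0}\int V(t,x_n)$; together with the compactness of the target $S^1$, this gives $\|x_n\|_{H^1}$ bounded without ever touching the gradient. You instead set aside the information $f(x_h)\to c$ entirely and derive the kinetic bound from $\|\nabla f(x_h)\|\to 0$ tested against the zero-mean loop part $\tilde y_h$, invoking $\|V'\|_\infty$ and Poincar\'e--Wirtinger and an absorption. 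Both routes are valid (yours is a pleasant observation that, with a bounded potential, the gradient condition alone already yields the a priori bound), but the paper's is shorter and avoids introducing the splitting $y_h=\xi_h+\tilde y_h$ and the attendant bookkeeping with the fixed affine part. The passage from weak to strong $H^1$ convergence is essentially identical in spirit: the paper uses $f'(x_n)[x_n-x]\to 0$ together with weak $H^1$-convergence of $\int\dot x_n\dot x$ to conclude $\int|\dot x_n|^2\to\int|\dot x|^2$, while you phrase it via $\langle f'(x_h)-f'(x),x_h-x\rangle\to0$ plus uniform continuity of $V'$; these are standard equivalent formulations of the same absorbing argument.
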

\begin{proof}
At first we notice that $V$ is bounded. In fact, $V$ is $C^2$,
periodic in the $t$ variable, and $x(t)$ is periodic.
Furthermore, because $x\in H^1_{k_1,k_2T_0}$, is also continuous, so
the potential $V$ is bounded.

Suppose that $x_n$ is a Palais Smale sequence, i.e. that
\begin{equation}\label{PS1}
f(x_n)=\frac 1{T_0}\int\frac 12 |\dot x_n|^2-V(t,x_n)\ dt\rightarrow c\in\R;
\end{equation}
\begin{equation}\label{PS2}
f'(x_n)[v]=\frac 1{T_0}\int \dot x_n\dot v -V'(t,x_n)v\ dt \rightarrow 0\ \forall v\in H^1_{0,k_2T_0}.
\end{equation}
By (\ref{PS1}), we know that $f(x_n)$ is bounded.
Because $V(t,x_n)$ is bounded, we have that
also $||x_n||_{H_1}$ is bounded.
Thus, up to subsequence, $x_n\rightharpoonup x$
weakly in $H^1$, furthermore, for the
Sobolev immersion theorem, we have that
$x_n\rightarrow x$ in $L^2$ and uniformly. By (\ref{PS2}) we have that
\begin{equation}
f'(x_n)[x_n-x]=\frac 1{T_0}\int <\dot x_n,\dot x_n-\dot x>  -V'(t,x_n)(x_n-x)\ dx \rightarrow 0.
\end{equation}
We know that $V'(t,x_n)\rightarrow V'(t,x)$ uniformly (and thus $L^2$).
Then
\begin{equation}
\int V'(t,x_n)(x_n-x)\rightarrow0.
\end{equation}
So we obtain that
\begin{equation}
\frac 1{T_0}\int <\dot x_n,\dot x_n-\dot x> =
\frac 1{T_0}\int |\dot x_n|^2-\frac 1{T_0}\int \dot x_n\dot x\rightarrow 0.
\end{equation}
But, because $x_n\rightarrow x$ weakly in $H^1$ we have that
\begin{equation}
\int \dot x_n\dot x\rightarrow\int |\dot x|^2,
\end{equation}
so we have that
\begin{equation}
||x_n||_{H^1}\rightarrow||x||_{H^1},
\end{equation}
that concludes the proof.
\end{proof}

\section{The Bott and Maslov indexes}
The Morse index of a periodic orbit
is  strongly related to two others indexes.
One is the Maslov index and the other is an index
that we have called Bott index since it has been introduced in the study of
geodesics by Bott.

These indexes turn out to have the same numerical value but they refer to
different mathematical objects. Indeed, the Morse index of a periodic orbit
$x(t)$ measures the signature of the Hessian of $f$ at $x(t)$, the Maslov index
the half windings in the symplectic group $Sp(2)$ of the matrix of the
fundamental solutions of the linearized equation around $x(t)$
and the Bott index the negative eigenvalues of the
operator $-\ddot{y}-V''(x(t),t)y$ associated to the linearized equation.

We need to introduce the Bott index to easily compute the twisting frequency
of a periodic orbit while the Maslov index to characterize the periodic orbits
of type $\alpha$ and type $\beta$.

\subsection{The Bott index and the twisting frequency}
Let us consider, for $\sigma \in S^1=\{ z \in \mathbb{C} : |z|=1\}$,
\begin{displaymath}
L^2_{\sigma,T}=\{ x \in L^2_{loc}(\mathbb{R},\mathbb{C^N})\
:\ x(t+T)=\sigma \cdot x(t)\text{ for a.a. }t\in\mathbb{R} \}
\end{displaymath}
where $L^2_{loc}$ is the set of function $x:\mathbb{R} \rightarrow \mathbb{C^N} $
which are measurable and whose square is locally integrable.
$L^2_{\sigma,T}$ is an Hilbert space with the following scalar product
\begin{displaymath}
(x,y)=\frac{1}{T}\int_0^T (x(t),y(t))_{\mathbb{C^N}}dt.
\end{displaymath}
Now, we consider the following differential equation
\begin{equation}
\label{eigenvalue}
\ddot{y}+A(t)y=-\lambda y,
\end{equation}
with $y \in \mathbb{C^N},\lambda \in \mathbb{R} $ and $A(t)$ a family of real
symmetric $N \times N$ matrices $T_0$-periodic,
defined on $L^2_{\sigma,T_0}$.\\
Let $W^2_{loc}(\mathbb{R},\mathbb{C^N})$ be the space of functions having two
square locally integrable derivative and ${\cal{L}}_{\sigma,T_0}$ be the
extension
to  $W^2_{loc}(\mathbb{R},\mathbb{C^N}) \cap L^2_{\sigma,T_0}$ of the operator
\begin{displaymath}
-\ddot{y} - A(t)y.
\end{displaymath}
The eigenvalue problem (\ref{eigenvalue}) becomes
\begin{equation}
{\cal{L}}_{\sigma,T_0}y = \lambda y,
\end{equation}
with $y \in W^2_{loc}(\mathbb{R},\mathbb{C^N}) \cap L^2_{\sigma,T_0}$.
The spectrum of this selfadjoint unbounded operator is discrete
with a finite number of negative eigenvalue.\\
This fact allows us to define a function
\begin{displaymath}
j(T_0, \cdot): S^1 \rightarrow \mathbb{N}
\end{displaymath}
as follows:
\begin{equation}
j(T_0,\sigma)=
\left\{
\begin{array}{c}
\text{ number of negative eigenvalues of }{\cal{L}}_{\sigma,T_0}\\
\text{ counted with their multiplicity.}
\end{array}
\right\}
\end{equation}
In order to define the Bott index we need that the operator ${\cal{L}}_{1,T_0}$
is nondegenerate, i.e that $0$ is not an eigenvalue of ${\cal{L}}_{1,T_0}$ .

In this case we can define the Bott index in the
following way:
\begin{definition}
We denote the function $j(T_0,1)$ the Bott index relative to the equation
$\ddot{y}+A(t)y=0$ in the interval $[0,T_0]$.
\end{definition}
Now let $W(t): \mathbb{C}^{2N} \rightarrow \mathbb{C}^{2N} $ the matrix
of the fundamental solutions relative to the equation $\ddot{y}+A(t)y=0$,
namely the solution of the following Cauchy problem
$$
\left \{
\begin{array}{ll}
\dot{W}(t) + \mathcal{A}(t)W(t)=0 \\
W(0)= I.
\end{array}
\right.
$$
where
$$
\mathcal{A}(t) = \left(
\begin{array}{cc}
0 & I \\
-A(t) & 0\\
\end{array}
\right).
$$
The eigenvalues of $W(T_0)$ are called Floquet multipliers.
The nondegenerate condition means that the linear system
 $\ddot{y}+A(t)y=0$ does not have any nontrivial $T_0$-periodic solutions, i.e
that  $1$ is
not a Floquet multiplier of $W(T_0)$.\\
The Bott index fulfills the following properties

\begin{prop}
\label{propMaslov}
The function $j(T_0,\sigma)$ satisfies the following properties.\\
(i) $j(T_0,\sigma)= j(T_0,\bar{\sigma})$\\
\\
(ii) if $j(T_0,\sigma)$ is discontinuous at the point $\sigma ^*$
then $\sigma ^*$ is a Floquet multiplier\\
\\
(iii) $|j(T_0,\sigma_2)-j(T_0,\sigma_1)| \leq l \quad \forall \sigma_2,\sigma_1 \in S^1- \{+1,-1\} $
where $2l$ is the number of non-real Floquet multipliers on $S^1$
counted with their molteplicity\\
\\
(iv) $$j(kT_0,\theta)=\sum_{j=0}^{k-1}j(T_0,\sigma_j)$$ where $\sigma_0,\sigma_1,...,\sigma_{k-1}$
are the $k$ values of $\sqrt[k]{\theta}$.
\end{prop}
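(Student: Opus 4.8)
The plan is to prove the four properties in turn, the tools being (a) a complex-conjugation symmetry coming from the reality of $A(t)$, (b) the analytic dependence of $\mathcal{L}_{\sigma,T_0}$ on $\sigma$, transported to a $\sigma$-independent Hilbert space, and (c) a Fourier-type splitting of $L^2_{\theta,kT_0}$ under translation by $T_0$. Property (i) is immediate from (a): if $y\in W^2_{loc}(\R,\C^N)\cap L^2_{\sigma,T_0}$ satisfies $\mathcal{L}_{\sigma,T_0}y=\lambda y$ with $\lambda\in\R$, then $\overline{y}$ lies in $W^2_{loc}(\R,\C^N)$, satisfies $\overline{y}(t+T_0)=\overline{\sigma}\,\overline{y}(t)$, hence $\overline{y}\in L^2_{\overline{\sigma},T_0}$, and $-\ddot{\overline{y}}-A(t)\overline{y}=\overline{-\ddot y-A(t)y}=\lambda\overline{y}$. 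Thus conjugation is an antilinear bijection between the $\lambda$-eigenspaces of $\mathcal{L}_{\sigma,T_0}$ and of $\mathcal{L}_{\overline{\sigma},T_0}$; summing dimensions over $\lambda<0$ gives $j(T_0,\sigma)=j(T_0,\overline{\sigma})$.

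For (ii) I would first move the whole family onto a fixed space. Writing $\sigma=e^{i\theta}$ (locally in a continuous branch of $\theta$) and $y(t)=e^{i\theta t/T_0}z(t)$ identifies $L^2_{\sigma,T_0}$ with the $\sigma$-independent space of $T_0$-periodic $\C^N$-valued functions, and conjugates $\mathcal{L}_{\sigma,T_0}$ into $M_\theta=-\frac{d^2}{dt^2}-\frac{2i\theta}{T_0}\frac{d}{dt}+\frac{\theta^2}{T_0^2}-A(t)$ acting on the $\theta$-independent domain $H^2_{per}$. Since $-i\,d/dt$ is self-adjoint on periodic functions and the lower-order terms are infinitesimally bounded relative to the second derivative, each $M_\theta$ is self-adjoint and $\theta\mapsto M_\theta$ is a polynomial (of degree two) of operators with common domain, i.e. a self-adjoint holomorphic family of type (A). By Kato's analytic perturbation theory its eigenvalues can be labelled by real-analytic functions of $\theta$, so they depend continuously on $\theta$ and none vanishes on a whole interval (there being only finitely many Floquet multipliers). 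Hence $j(T_0,e^{i\theta})$, the number of negative eigenvalues of $M_\theta$, is locally constant off the finite set of $\theta$ where $0\in\mathrm{spec}(M_\theta)$; and $0\in\mathrm{spec}(M_\theta)$ exactly when $\ddot y+A(t)y=0$ has a nontrivial solution with $y(t+T_0)=e^{i\theta}y(t)$, i.e. when $e^{i\theta}$ is a Floquet multiplier. This is (ii).

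For (iii), decompose $S^1\setminus\{+1,-1\}=\Gamma_+\cup\Gamma_-$ into its two open arcs; by (i) it is enough to bound $|j(T_0,\sigma_2)-j(T_0,\sigma_1)|$ when $\sigma_1,\sigma_2\in\Gamma_+=\{e^{i\theta}:0<\theta<\pi\}$ (if $\sigma_2$ lies in $\Gamma_-$ replace it by $\overline{\sigma_2}\in\Gamma_+$). Along $\Gamma_+$ the step function $\theta\mapsto j(T_0,e^{i\theta})$ jumps only at the Floquet multipliers lying in $\Gamma_+$, all of which are non-real, and the jump at such a $\sigma^*$ is bounded by the number of analytic eigenvalue curves of $M_\theta$ passing through $0$ over the corresponding $\theta^*$, which equals $\dim\ker\mathcal{L}_{\sigma^*,T_0}$, the geometric multiplicity of $\sigma^*$ as a Floquet multiplier, hence at most its algebraic multiplicity. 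Summing over the Floquet multipliers in $\Gamma_+$, the total variation of $j$ on $\Gamma_+$ is at most the number of non-real Floquet multipliers on $\Gamma_+$ counted with multiplicity, i.e. $l$; therefore $|j(T_0,\sigma_2)-j(T_0,\sigma_1)|\le l$.

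For the iteration formula (iv), fix $\theta\in S^1$ and let $\sigma_0,\dots,\sigma_{k-1}$ be the $k$ distinct $k$-th roots of $\theta$. On $L^2_{\theta,kT_0}$ introduce the translation $(Sy)(t)=y(t+T_0)$; quasi-periodicity makes $S$ unitary with $S^k=\theta\,\mathrm{Id}$, so $L^2_{\theta,kT_0}=\bigoplus_{j=0}^{k-1}E_j$ is an orthogonal decomposition into the eigenspaces $E_j=\ker(S-\sigma_j)$, and $E_j$ coincides with $L^2_{\sigma_j,T_0}$ viewed inside $L^2_{\theta,kT_0}$, since $\sigma_j^k=\theta$ makes a $\sigma_j$-quasiperiodic function of period $T_0$ automatically $\theta$-quasiperiodic of period $kT_0$. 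Because $A(t)$ is $T_0$-periodic, $\mathcal{L}_{\theta,kT_0}$ commutes with $S$, hence preserves each $E_j$, and its restriction to $E_j$ is exactly $\mathcal{L}_{\sigma_j,T_0}$; so the spectrum of $\mathcal{L}_{\theta,kT_0}$, with multiplicity, is the disjoint union of the spectra of the $\mathcal{L}_{\sigma_j,T_0}$, and counting negative eigenvalues gives $j(kT_0,\theta)=\sum_{j=0}^{k-1}j(T_0,\sigma_j)$. I expect the delicate point to be the analytic-perturbation step in (ii) and (iii): verifying that $\{M_\theta\}$ really is a holomorphic family of type (A) with $\theta$-independent domain and that the eigenvalue curves near a crossing correctly account for the jump of the integer-valued function $j$; the algebraic bookkeeping in (i), (iii) and (iv) is then routine.
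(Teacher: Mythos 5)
The paper does not actually supply a proof of this proposition: the authors write only that ``The proof of $(i)$, $(ii)$, $(iii)$, $(iv)$ is contained in \cite{Ben91}.'' You have instead reconstructed a self-contained argument, and it follows the classical Bott/Benci line, which is presumably what the cited reference does as well, so in spirit it is the same approach even though the paper itself gives none. Your steps check out. For (i), the real symmetry of $A(t)$ makes $y\mapsto\bar y$ an antilinear isometry intertwining $\mathcal{L}_{\sigma,T_0}$ and $\mathcal{L}_{\bar\sigma,T_0}$ and preserving real eigenvalues, hence the counting functions agree. For (ii) and (iii), the gauge change $y=e^{i\theta t/T_0}z$ does carry $\mathcal{L}_{\sigma,T_0}$ to $M_\theta=-d^2/dt^2-(2i\theta/T_0)\,d/dt+\theta^2/T_0^2-A(t)$ on the fixed domain $H^2_{\rm per}$; since $M_\theta$ has compact resolvent and depends polynomially on $\theta$, it is a self-adjoint holomorphic family of type (A), so Kato--Rellich gives locally analytic eigenvalue branches, the counting function $j$ jumps only where a branch crosses $0$, i.e.\ at Floquet multipliers, and the size of a jump is bounded by $\dim\ker\mathcal{L}_{\sigma^*,T_0}$, which is the geometric and hence at most the algebraic multiplicity of $\sigma^*$ as an eigenvalue of $W(T_0)$; since the non-real Floquet multipliers on $S^1$ split into $l$ on each open arc, the reduction to one arc via (i) gives the bound by $l$. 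For (iv), the shift $S$ is unitary with $S^k=\theta\,\mathrm{Id}$, its eigenspaces $E_j=\ker(S-\sigma_j)$ give the orthogonal splitting, $\mathcal{L}_{\theta,kT_0}$ commutes with $S$ by $T_0$-periodicity of $A$, and on $E_j$ (identified isometrically with $L^2_{\sigma_j,T_0}$ since $|y|$ is $T_0$-periodic) it acts as $\mathcal{L}_{\sigma_j,T_0}$; summing negative-eigenvalue counts over the blocks gives the iteration formula. The only places that deserve an explicit word in a polished write-up are the verification that $M_\theta$ has $\theta$-independent domain and compact resolvent (so that Kato--Rellich genuinely applies) and the remark that an eigenvalue branch touching $0$ without changing sign does not contribute to the jump, which is why ``at most'' and not ``equal to'' appears in the jump estimate; you flagged both, and they are routine.
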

The proof of $(i)$, $(ii)$, $(iii)$, $(iv)$ is contained in \cite{Ben91}.\\

The Bott index allows to define the twisting frequency as follows:
\begin{equation}
\tau = \frac{1}{2\pi T_0} \int_{0}^{2 \pi} j(T_0,exp[i\omega]) d\omega.
\end{equation}

\begin{prop}
\label{propTwist}
The twisting frequency satisfies the following properties:\\
(i) $\tau$ = $\lim_{T \rightarrow \infty}$ $\frac{1}{T}j(T,1)$ \quad $T=kT_0$\\
\\
(ii) $\tau = \frac{1}{2\pi T} \int_{S^1} j(T,\sigma) d\sigma$  \quad
$T=kT_0$\\
\\
(iii) $|T\tau - j(T,\sigma)| \leq l$ \quad  $\forall \sigma \in S^1- \{+1,-1\} $ where $2l$ is the
number of non-real Floquet multipliers on $S^1$ counted with their multiplicity and $T=kT_0$\\
\\
(iv) $\forall \sigma \in S^1$ we have  $\tau$ = $\lim_{T \rightarrow \infty}$
$\frac{1}{T}j(T,\sigma)$ \quad $T=kT_0$\\
\end{prop}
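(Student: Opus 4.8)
The plan rests on Proposition~\ref{propMaslov} and on the defining formula $\tau = \frac{1}{2\pi T_0}\int_0^{2\pi} j(T_0, e^{i\omega})\,d\omega$; I would prove (ii) first (it is the statement that the period $T_0$ in the definition of $\tau$ may be replaced by any multiple $T=kT_0$), then read off (iii), and finally obtain (i) and (iv) as limits. \emph{Step 1 (proof of (ii)).} Write $\sigma=e^{i\psi}$, $\psi\in[0,2\pi]$, and apply property (iv) of Proposition~\ref{propMaslov}: for $T=kT_0$ one has $j(T,e^{i\psi})=\sum_{j=0}^{k-1} j\big(T_0,e^{i(\psi+2\pi j)/k}\big)$. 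Integrating over $\psi\in[0,2\pi]$, exchanging the finite sum with the integral, and in the $j$-th term substituting $\omega=(\psi+2\pi j)/k$ (so $d\psi=k\,d\omega$ and, as $\psi$ runs over $[0,2\pi]$, $\omega$ runs over the arc $[2\pi j/k,\,2\pi(j+1)/k]$), the $k$ arcs tile $[0,2\pi]$ and one finds $\int_0^{2\pi} j(T,e^{i\psi})\,d\psi = k\int_0^{2\pi} j(T_0,e^{i\omega})\,d\omega$. Dividing by $2\pi T=2\pi kT_0$ yields $\frac{1}{2\pi T}\int_{S^1} j(T,\sigma)\,d\sigma=\tau$, which is (ii).

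\emph{Step 2 (proof of (iii)).} Since $A(t)$ is also $kT_0$-periodic, Proposition~\ref{propMaslov} applies verbatim with $T_0$ replaced by $T=kT_0$; here the Floquet multipliers are the eigenvalues of $W(T)=W(T_0)^k$, i.e.\ the $k$-th powers of those of $W(T_0)$, and a non-real multiplier of $W(T_0)$ on $S^1$ yields a $k$-th power still on $S^1$ and still non-real unless it collapses onto $\pm1$, so $W(T)$ has at most $2l$ non-real Floquet multipliers on $S^1$. Property (iii) of Proposition~\ref{propMaslov}, at period $T$, then tells us that on $S^1\setminus\{+1,-1\}$ the function $j(T,\cdot)$ takes values in an interval of length $\le l$. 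By Step 1, $T\tau=\frac{1}{2\pi}\int_{S^1} j(T,\sigma)\,d\sigma$ is the mean of $j(T,\cdot)$ over $S^1$ (the two excluded points being negligible), hence lies in the same interval; therefore $|T\tau-j(T,\sigma)|\le l$ for every $\sigma\in S^1\setminus\{+1,-1\}$, which is (iii).

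\emph{Step 3 (proof of (iv), and of (i)).} For $\sigma=e^{i\psi_0}\in S^1$ and $T=kT_0$, property (iv) of Proposition~\ref{propMaslov} gives $\tfrac1T j(T,e^{i\psi_0})=\tfrac1{T_0}\cdot\tfrac1k\sum_{j=0}^{k-1} g\big(\tfrac{\psi_0+2\pi j}{k}\big)$, with $g(\omega):=j(T_0,e^{i\omega})$. The function $g$ is bounded (the operator $\mathcal{L}_{\sigma,T_0}$ has only finitely many negative eigenvalues) and piecewise constant with jumps only at the finitely many Floquet multipliers of $W(T_0)$ on $S^1$ (Proposition~\ref{propMaslov}(ii)), hence Riemann integrable; so the average of $g$ over the $k$ equally spaced points $\tfrac{\psi_0+2\pi j}{k}$ converges, as $k\to\infty$, to $\tfrac1{2\pi}\int_0^{2\pi} g(\omega)\,d\omega$. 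Multiplying by $1/T_0$ and comparing with the definition of $\tau$ gives $\lim_{T\to\infty}\tfrac1T j(T,\sigma)=\tau$ for every $\sigma\in S^1$, which is (iv); the case $\psi_0=0$ is (i). (For $\sigma\ne\pm1$ one could instead simply divide the inequality of (iii) by $T$ and let $T\to\infty$.)

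\emph{Main obstacle.} The only genuine computation is the bookkeeping in Step 1 — checking that after the substitution $\omega=(\psi+2\pi j)/k$ the $k$-fold sum of Proposition~\ref{propMaslov}(iv) reassembles exactly the single integral defining $\tau$. Once (ii) is in hand, (iii) is just the remark that a mean lies between the extreme values of the function, and (i), (iv) amount to the convergence of equidistributed Riemann sums of a step function. The one point that must not be overlooked is that the constant $l$ is not degraded when the period is multiplied by $k$, which is precisely the observation — used in Step 2 — that $W(kT_0)=W(T_0)^k$ cannot create new non-real Floquet multipliers on the unit circle.
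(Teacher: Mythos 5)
The paper does not actually prove Proposition~\ref{propTwist}; it simply refers the reader to \cite{Ben91}. Your argument is therefore a genuine addition: a correct, self-contained derivation from Proposition~\ref{propMaslov} and the defining formula $\tau=\frac{1}{2\pi T_0}\int_0^{2\pi} j(T_0,e^{i\omega})\,d\omega$. The three steps hold up. In Step~1 the change of variables $\omega=(\psi+2\pi j)/k$ tiles $[0,2\pi]$ by the $k$ arcs, so the integral defining $\tau$ is unchanged when $T_0$ is replaced by $kT_0$, which is exactly (ii). Step~2 correctly isolates the one fact needed to apply Proposition~\ref{propMaslov}(iii) at the longer period: since $W(kT_0)=W(T_0)^k$, a $k$-th power of a non-real unit eigenvalue is either still non-real or collapses onto $\pm1$, so the constant $l$ cannot increase, and (iii) then follows because an average of $j(T,\cdot)$ lies in its (length-$\le l$) range. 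In Step~3 the Riemann-sum convergence is legitimate because $g(\omega)=j(T_0,e^{i\omega})$ is a bounded step function with finitely many jumps (at the unit-circle Floquet multipliers), so averages over $k$ equally spaced translated nodes converge to its mean. The only small point worth spelling out there is that a sample node could land on one of the finitely many discontinuities of $g$; since each node contributes $O(1/k)$ to the average, this does not affect the limit, and for $\sigma\neq\pm1$ the shortcut you mention — dividing the bound in (iii) by $T$ — sidesteps the issue entirely and leaves only $\sigma=\pm1$ to the Riemann-sum argument.
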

The proof of $(i)$, $(ii)$, $(iii)$, $(iv)$ is contained in \cite{Ben91}.

\subsection{The Maslov index and the geometrical representation of $Sp(2)$}
In this section we give some properties of the Morse index 
by means of Maslov index in the two dimensional case.

Let us consider the linear equation
\begin{equation}
\label{linearequation}
\ddot{y}+A(t)y=0
\end{equation}
where $A(t)$ is $T_0$-periodic.
Let $W(t)$ be the matrix of the fundamental solutions of the linear equation
(\ref{linearequation}) at time $t$, with $t \in [0,T]$. The matrix  $W(t)$
is unimodular, i.e it is symplectic and we can associate to
the linear equation (\ref{linearequation}) a path $\gamma$ in the symplectic group.
The Maslov index is an integer associated to
the path of  $W(t)$ in the symplectic group. The Maslov index theory for 
any non degenerate path in $Sp(2)$ was established first in \cite{CZ84} and 
\cite{Lon90}; we avoid rigorous definitions for the sake of 
brevity and we refer to the book of Abbondandolo \cite{Abb01}. 

 Loosely speaking, the Maslov index is the number of 
half windings made by
the path in $Sp(2)$. However, in order to give a 
geometrical meaning of the Maslov 
index we need to describe some properties of the symplectic group of the plane.

The symplectic group of the plane $Sp(2)$ consists of the real matrices
two by two $A$ such that $A^{T}JA=J$, where $A^{T}$ is the transpose of $A$
and
$$
J = \left(
\begin{array}{cc}
0 & 1 \\
-1 & 0\\
\end{array}
\right).
$$
The eigenvalues $\lambda_1$ and $\lambda_2$ of $A \in Sp(2)$ are of the
following form:
 \begin{quote}
\begin{itemize}
\item
\emph{$\lambda_1=\lambda_2 =1$}
\item
\emph{$\lambda_1=\lambda_2 =-1$}
\item
\emph{$\lambda_1=\bar{\lambda_2} \quad \lambda_1, \lambda_2 \in S^1- \{+1,-1\}$}
\item
\emph{$\lambda_1=\frac{1}{\lambda_2} \quad \lambda_1, \lambda_2  \in \mathbb{R}- \{+1,-1\}$ }
\end{itemize}
\end{quote}

A parametrization of $Sp(2)$ due to Gel'fand and Lidski\v\i\
allows to visualize the simplectic group as  $S^1 \times D$ where $D$ is 
the unitary disk.

The set of matrices in $Sp(2)$ that correspond to the degenerate condition, i.e
those such that $1$ is a Floquet multiplier, disconnect the 
simplectic group into
two regions $\alpha$ and $\beta$. The set $\alpha$ is that of the matrices with
distinct and real positive Floquet multipliers and
$\beta$ is that of matrices with complex or real negative Floquet multipliers. 
Figure \ref{Figure3} gives a rough idea of sets  $\alpha$ and $\beta$ in the
symplectic
group; we refer to \cite{Abb01} for a rigorous and pretty picture.

\begin{center}
\begin{figure}
\hspace{3.5cm}
\includegraphics[scale=0.7,angle=0]{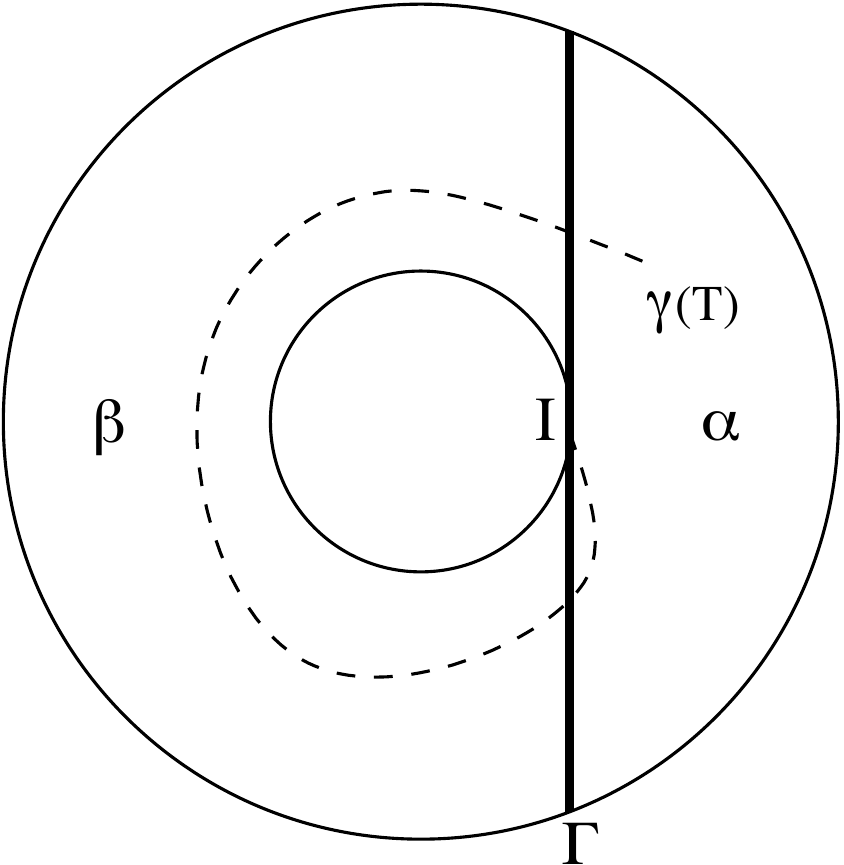}
\caption{The sets $\alpha$, $\beta$ and the set $\Gamma$ of degenerate 
matrices. The set $\Gamma$ is represented by the vertical line.}
\label{Figure3}
\end{figure}
\end{center}

Now, we can state the proposition that relates the Bott index
with the Maslov index and that characterize the periodic orbit
depending on the parity of the Maslov index.
\begin{prop}
\label{propmaslov}
The Maslov index $\mu_{\gamma}(T)$ of the path $\gamma : [0,T] \rightarrow Sp(2)$ fulfills the
following properies\\
\\
(i) $\mu_{\gamma}(T)=j(T,1)$\\
\\
(ii) $\mu_{\gamma}(T)$ is even if and only if the Floquet multipliers of $\gamma(T)$
are distinct and real positive\\
\\
(iii) $\mu_{\gamma}(T)$ is  odd if and only if the Floquet multipliers of $\gamma(T)$
are complex or real negative\\
\end{prop}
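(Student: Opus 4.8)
The plan is to prove (i) first and then deduce (ii) and (iii) from it by an elementary parity analysis of paths in $Sp(2)$.

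\emph{Part (i).} By definition the Bott index $j(T,1)$ is the number of negative eigenvalues, counted with multiplicity, of the operator $-\ddot y - A(t)y$ on the $T$-periodic functions, i.e. the signature of the index form $y \mapsto \int_0^T\bigl(|\dot y|^2 - A(t)y^2\bigr)\,dt$ on the space of $T$-periodic loops. On the other hand $\gamma(t)=W(t)$ is precisely the symplectic path traced by the fundamental solution of the first order reduction of $\ddot y + A(t)y=0$, and $\mu_\gamma(T)$ is its Maslov (Conley--Zehnder) index. The identity $\mu_\gamma(T)=j(T,1)$ is then the Morse index theorem for periodic orbits; I would quote it from \cite{CZ84} and \cite{Lon90} (see also \cite{Abb01}). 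Its substance is that both sides compute, via a spectral flow (a homotopy of the periodicity condition), the same signed count of the instants at which $W(t)$ meets the degenerate set $\Gamma = \{ W \in Sp(2) : 1 \text{ is an eigenvalue of } W \}$.

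\emph{Parity reduces to the endpoint (for (ii) and (iii)).} Non-resonance of eq.~(\ref{equation1}) means exactly that the linearized equation has no nonzero $T$-periodic solution, i.e. $1$ is not a Floquet multiplier, so $\gamma(T)\notin\Gamma$ and $\mu_\gamma(T)$ is a well-defined integer. Two paths in $Sp(2)$ starting at $I$ and ending at the same matrix $W\notin\Gamma$ differ by a loop; since $\pi_1(Sp(2))=\Z$ is generated by a full rotation, whose Maslov index is $2$, every loop has even Maslov index, so by additivity of the index the two paths have indices of the same parity. Moreover, appending to a path a sub-path that stays entirely in $Sp(2)\setminus\Gamma$ does not change the Maslov index. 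Combining these two facts, the parity of $\mu_\gamma(T)$ depends only on which connected component of $Sp(2)\setminus\Gamma$ contains $\gamma(T)$ --- that is, only on whether $\gamma(T)\in\alpha$ or $\gamma(T)\in\beta$ (these being the two components, $\alpha = \{\operatorname{tr} W > 2\}$ the matrices with distinct positive real Floquet multipliers and $\beta = \{\operatorname{tr} W < 2\}$ the matrices with complex or real negative multipliers).

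\emph{Evaluation on model equations.} It then suffices to compute the parity at one matrix of $\alpha$ and one of $\beta$, which I would do with constant-coefficient scalar equations. For $\ddot y - c^2 y = 0$ the index form is positive definite, so $j(T,1)=0$, while the fundamental solution at time $T$ has trace $2\cosh(cT)>2$, i.e. lies in $\alpha$; thus $\mu_\gamma(T)=0$ is even on $\alpha$. For $\ddot y + \omega^2 y = 0$ with $\omega T = \pi$ the fundamental solution at time $T$ is $-I$ (which lies in $\beta$), while the index form has exactly one negative direction, the constant mode, so $j(T,1)=1$ and $\mu_\gamma(T)=1$ is odd on $\beta$. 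By the previous step this settles the parity on all of $\alpha$ and all of $\beta$, which is exactly (ii) and (iii). Geometrically this is the picture of Figure~\ref{Figure3}: $\Gamma$ separates $Sp(2)$ into the two sheets on which the number of half-windings of the fundamental solution from $I$ has a fixed parity.

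\emph{Main obstacle.} The only genuinely substantial ingredient is Part (i): the Morse index theorem identifying the negative-eigenvalue count of a Sturm--Liouville operator on periodic functions with the winding number of the associated symplectic path. It is classical but not elementary, so I would import it from \cite{CZ84,Lon90,Abb01} rather than reprove it. The second point requiring care is the use, in the parity step, of the homotopy and additivity properties of the Maslov index (``a sub-path avoiding $\Gamma$ contributes nothing''), so that the conclusion really is a proof and not merely a reading of Figure~\ref{Figure3}; again these properties are available in the cited references.
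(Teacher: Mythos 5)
Your proposal is correct, and it is more of a proof than the paper itself gives: the paper's argument is a pure citation to \cite{An98}, \cite{Lon98}, and \cite{Abb01} for all three items. You cite the same sources for (i), which is the genuinely deep ingredient (the periodic Morse index theorem identifying the Bott index with the Conley--Zehnder index of the fundamental-solution path), and then derive (ii) and (iii) by a self-contained parity argument: the parity of the Maslov index depends only on which component of $Sp(2)\setminus\Gamma$ contains $\gamma(T)$, because loops in $Sp(2)$ have even index (a generator of $\pi_1(Sp(2))\cong\Z$ has index $2$) and subpaths avoiding $\Gamma$ contribute nothing, so one model computation per component suffices. Your two model computations check out: for $\ddot y - c^2 y=0$ the index form is positive definite so $j(T,1)=0$ and the endpoint has trace $2\cosh(cT)>2$, hence lies in $\alpha$; for $\ddot y+\omega^2 y=0$ with $\omega T=\pi$ the endpoint is $-I\in\beta$ and the $T$-periodic Fourier eigenvalues $(2\pi k/T)^2-(\pi/T)^2$ are negative only at $k=0$, so $j(T,1)=1$. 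What your route buys over the paper is a proof legible without chasing three references. One fact you use implicitly when passing from ``parity is a component invariant'' to ``evaluate on one model per component'' is that $\alpha=\{\operatorname{tr}>2\}$ and $\beta=\{\operatorname{tr}<2\}$ are each path-connected in $Sp(2)$; the paper asserts this when it says $\Gamma$ disconnects $Sp(2)$ into exactly two regions, and it follows from the Gel'fand--Lidski\v{\i} parametrization, but it is worth making explicit since the argument rests on it.
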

\begin{proof}
The proof of  $(i)$, $(ii)$, $(iii)$ can be found in 
\cite{An98}, \cite{Lon98} and \cite{Abb01}
\end{proof}

\section{Main Results on periodic orbits}

We want to introduce the rotation frequency of a curve as follows.

\begin{definition}
If $x(t)\in H^1_{k_1,k_2T_0}$, then its rotation frequency is
\begin{equation}
\rho(x)=\frac {k_1}{k_2T_0}
\end{equation}
\end{definition}

We recall that the Morse index allows to separate the periodic orbits in two
distinct classes as described in the previous section.
\begin{definition}
Let $x(t)$ be a periodic solution of (\ref{equation1}) in $H^1_{k_1,k_2T_0}$;
$x(t)$ is periodic of type $\alpha$ (positive distinct Floquet multipliers) if $m(x,k_2T_0)$ is
even,
and periodic of type $\beta$ if $m(x,k_2T_0)$ is odd (complex or negative Floquet multipliers).
\end{definition}
\begin{prop}
For any $x$ periodic solution in $H^1_{k_1,k_2T_0}$, we have that $f(x)$
is bounded  by a constant which depends only on $k_1$ and $k_2T_0$.
\end{prop}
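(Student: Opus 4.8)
The plan is to bound $f(x)$ above and below using only the affine decomposition \eqref{bound} together with the facts that $V$ is bounded and that $x$ is a critical point of $f$. Write $x(t)=\frac{k_1}{k_2T_0}t+y(t)$ with $y\in H^1_{0,k_2T_0}$. The key observation is that, although $y$ itself is not controlled a priori, the derivative $\dot x$ is: testing the Euler--Lagrange equation \eqref{funzionale'} against a suitable variation will pin down $\int_0^{k_2T_0}|\dot y|^2\,dt$ in terms of $\sup|V'|$, and from there the value $f(x)$ is controlled by $\sup|V|$ and the winding data. First I would record the constant $M:=\sup_{\R\times S^1}(|V|+|V'|)$, which is finite because $V\in C^2(\R\times S^1)$, $S^1$ is compact and $V$ is $T_0$-periodic in $t$.

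For the upper bound, I use the variational characterization the other way: since $x$ is a \emph{minimizer} over its connected component? — no, it need not be a minimizer, so instead I argue directly. The cleanest route is to test \eqref{funzionale'} with $y$ itself (which lies in $H^1_{0,k_2T_0}$, and one checks the boundary normalization $y(0)=0$ is harmless, or one works with the periodic weak formulation that holds for all $y\in H^1_{0,k_2T_0}$). This gives
\begin{equation}
\frac{1}{k_2T_0}\int_0^{k_2T_0}|\dot y|^2\,dt
= \frac{1}{k_2T_0}\int_0^{k_2T_0} V'(t,x)\,y\,dt - \frac{k_1}{k_2T_0}\cdot\frac{1}{k_2T_0}\int_0^{k_2T_0}\dot y\,dt,
\end{equation}
and the last integral vanishes by periodicity of $y$. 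Hence $\|\dot y\|_{L^2}^2 \le M\,\|y\|_{L^1}\le M\sqrt{k_2T_0}\,\|y\|_{L^2}$. Now $y$ has mean value that can be normalized to lie in $[0,1)$ (we may choose the representative of $[x]$ so that $\int y\in[0,k_2T_0)$), so Poincaré–Wirtinger gives $\|y\|_{L^2}\le C(k_2T_0)\big(\|\dot y\|_{L^2}+1\big)$; feeding this back yields an a priori bound $\|\dot y\|_{L^2}\le C_1(k_1,k_2T_0,M)$ and then $\|y\|_{L^2}\le C_2(k_1,k_2T_0,M)$.

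With these bounds in hand, I estimate \eqref{funzionale} directly:
\begin{equation}
|f(x)| \le \frac{1}{k_2T_0}\int_0^{k_2T_0}\tfrac12\Big(\tfrac{k_1}{k_2T_0}+|\dot y|\Big)^2 dt + \frac{1}{k_2T_0}\int_0^{k_2T_0}|V(t,x)|\,dt
\le \frac{k_1^2}{2(k_2T_0)^2} + \frac{\|\dot y\|_{L^2}^2}{2k_2T_0} + \frac{k_1\,\|\dot y\|_{L^2}}{(k_2T_0)^{3/2}} + M,
\end{equation}
where I expanded $|\dot x|^2$ and used Cauchy--Schwarz on the cross term. Every term on the right depends only on $k_1$, $k_2T_0$ and $M$, so $f(x)$ is bounded by a constant depending only on $k_1$ and $k_2T_0$, as claimed.

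The main obstacle is the bookkeeping around the representative $[x]=x\bmod 1$ and the boundary condition $y(0)=0$ appearing in \eqref{funzionale'}: I must make sure that the weak Euler--Lagrange identity genuinely holds against all periodic test functions $y\in H^1_{0,k_2T_0}$ (not merely those vanishing at $0$), which is true for critical points of the periodic functional, and that the Poincaré inequality is applied to a representative with controlled mean. Once the admissibility of testing with $y$ is justified, the remaining steps are elementary inequalities. I therefore expect no serious difficulty beyond this point.
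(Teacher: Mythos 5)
Your argument is correct, but it is a genuinely different route from the paper's. The paper exploits the fact that $x$ is a \emph{classical} solution: $\ddot y=\ddot x=-V'_x(t,x)$ is bounded pointwise by $\sup|V'|$, and $\dot y$, being $k_2T_0$-periodic with zero mean, must vanish at some $\bar t$; the mean value theorem then gives the $L^\infty$ bound $|\dot y(t)|\le C\,k_2T_0$ directly, and $f(x)$ is bounded without any need to control $y$ itself or to invoke Poincar\'e. You instead work at the weak/variational level: you test the Euler--Lagrange identity with $y$, observe that the cross term $\rho\int\dot y\,dt$ vanishes by periodicity to isolate $\|\dot y\|_{L^2}^2$, and then close the estimate via Poincar\'e--Wirtinger after normalizing the mean of $y$ by an integer shift of the lift of $[x]$. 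Your route gives an $L^2$ rather than $L^\infty$ bound on $\dot y$, and it requires the extra step of a Young/Poincar\'e bootstrap plus the representative normalization, but it has the virtue of using only $H^1$ regularity and the weak formulation. The paper's argument is shorter and gives a stronger pointwise conclusion, at the cost of using the classical form of the ODE. Both proofs are sound; the one point in your write-up worth tightening is the remark about ``$y(0)=0$'': the critical-point identity \eqref{funzionale'} in fact holds for all test functions $v\in H^1_{0,k_2T_0}$ (by density, or because $H^1_{0,k_2T_0}$ is the full tangent space to the affine manifold), so testing with $y$ itself is legitimate once you state this cleanly rather than hedge it.
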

\begin{proof}
The value of $|V'(t,x(t))|$ is bounded because $V'(t,x)$ is a $C^1$
function on the compact set $S^1 \times S^1$.
Thus, it is sufficient to prove that $|\dot{x}(t)|$ is
bounded for all $x$ periodic solutions in $H^1_{k_1,k_2T_0}$.
By eq. (\ref{bound}), we know that $x(t)=\rho t +y(t)$ where $y \in H^1_{0,k_2T_0}$.

Moreover, we have that
\begin{displaymath}
\int_{0}^{k_2T_0} \dot{y}(t)dt=y(k_2T_0)-y(0)=0,
\end{displaymath}
therefore, for any $y \in H^1_{0,k_2T_0}$, there exist $\bar{t}$ such that $\dot{y}(\bar{t})=0$.
For the Lagrange theorem there exist $\xi \in [0,k_2T_0]$ such that
\begin{equation}
\frac{|\dot{y}(t)- \dot{y}(\bar{t})|}{|t-\bar{t}|} = |\ddot{y}(\xi)| =
|\ddot{x}(\xi)|= |V'_{x}(\xi,x(\xi))| \leq C.
\end{equation}
So, $|\dot{y}(t)| \leq C|t-\bar{t}| \leq Ck_2T_0$.
Finally, for any periodic solution  $x(t)\in H^1_{k1,k_2T_0}$
\begin{displaymath}
|\dot{x}(t)| = |\rho +\dot{y}(t) | \leq \rho + Ck_2T_0.
\end{displaymath}
\end{proof}

\begin{prop}\label{parity}
The number of critical point of $f$ is even in $H^1_{k_1,k_2T_0}$.
\end{prop}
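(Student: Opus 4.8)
The plan is to read off the conclusion from the Morse relations applied to $f$ on $M=H^1_{k_1,k_2T_0}$, combining the Poincar\'e polynomial of Lemma~\ref{basiclemma} with the fact that, under the standing non-resonance hypothesis, $f$ has only finitely many critical points, all of them non-degenerate. First I would verify the hypotheses: non-resonance makes every $k_2T_0$-periodic solution a non-degenerate (hence isolated) critical point; $f$ is bounded below on $M$ because $V$ is bounded; and $f$ satisfies the $(PS)$ condition, as shown above. Since, by the previous proposition, every critical value of $f$ lies below a constant $b_0=b_0(k_1,k_2T_0)$, choosing $a_0<\inf_M f$ and $b>b_0$ one has $f^{a_0}=\emptyset$ and all critical points inside $f_{a_0}^{b}$; the usual deformation argument (no critical values above $b_0$, together with $(PS)$) shows that $f^{b}$ is a strong deformation retract of $M$, so ${\cal P}_\lambda(f^{b},f^{a_0})={\cal P}_\lambda(M)=1+\lambda$.

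The step that needs a little care is the finiteness of the critical set. A critical point $x$ solves $\ddot{x}=-V'_x(t,x)$; the estimate obtained in the proof of the previous proposition gives $|\dot{x}(t)|\le\rho+Ck_2T_0$ pointwise, while $|\ddot{x}(t)|=|V'_x(t,x(t))|\le C$, so, choosing the representative with $x(0)\in[0,1)$, all critical points lie in a bounded subset of $C^2([0,k_2T_0])$, hence in a precompact subset of $M$. As $\nabla f$ is continuous the critical set is closed, and a closed, precompact set of isolated points is finite. Therefore ${\cal Q}_\lambda$ in the Morse relations is a genuine polynomial and
\[
\sum_{x\ \mathrm{critical}}\lambda^{m(x,k_2T_0)}=(1+\lambda)+(1+\lambda){\cal Q}_\lambda
\]
is an identity between polynomials.

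Evaluating this identity at $\lambda=-1$ makes the right-hand side vanish, so the number of critical points of even Morse index equals the number of critical points of odd Morse index; in particular the total number of critical points of $f$ on $H^1_{k_1,k_2T_0}$ is even, which is the assertion. I expect the precompactness/finiteness argument to be the only real obstacle — it is where the a priori $C^2$ bound on periodic solutions and the compactness of the embedding $C^2\hookrightarrow H^1$ enter; once that is settled, the parity statement is an immediate consequence of the Morse relations at $\lambda=-1$ together with the value $1+\lambda$ of the Poincar\'e polynomial.
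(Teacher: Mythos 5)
Your proposal is correct and follows essentially the same route as the paper: apply the Morse relations on $M=H^1_{k_1,k_2T_0}$, use Lemma~\ref{basiclemma} to get ${\cal P}_\lambda(M)=1+\lambda$, and read off evenness from the resulting factorization $(1+\lambda)\tilde{\cal Q}_\lambda$ of the Morse polynomial. The only (cosmetic) difference is that the paper evaluates at $\lambda=1$ to get the count $2\tilde{\cal Q}_1$, while you evaluate at $\lambda=-1$ to conclude that the numbers of critical points of even and odd Morse index coincide (which in turn gives evenness); both readings are equally immediate, and you are somewhat more explicit than the paper about justifying the finiteness of the critical set and the use of ${\cal P}_\lambda(M)$ on a non-compact manifold via $(PS)$ and the a priori bound on $f$.
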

\begin{proof}
The functional  $f$ is bounded on the periodic solutions in $H^1_{k_1,k_2T_0}$
by the above proposition. The Palais-Smale condition and the assumption
that eq. (\ref{equation1}) is non-resonant assures that the critical
point are in a finite number.

We apply the Morse relations given by (\ref{morserelationsclass})
\begin{displaymath}
\sum_{x \text{critical}}\lambda^{m(x)}=
{\cal P}_\lambda(M)+(1+\lambda) {\cal Q}_\lambda.
\end{displaymath}
where $M$ is $H^1_{k_1,k_2T_0}$.
The lemma (\ref{basiclemma}) shows that
${\cal P}_\lambda(H^1_{k_1,k_2T_0})=1+\lambda $ and, therefore,
the Morse relation becomes
\begin{equation}
\label{morserelations}
\sum_{x \text{critical}}\lambda^{m(x)}=
1+\lambda+(1+\lambda) {\cal Q}_\lambda = (1+\lambda) {\cal \tilde{Q}}_\lambda .
\end{equation}
$\lambda =1$ shows that the number of periodic solutions are $2 {\cal \tilde{Q}}_\lambda$.
\end{proof}

\begin{definition}
Let $\rho=\frac{k_1}{k_2T_0}$. We set
\begin{eqnarray*}
n_\alpha(\tau,\rho)&=&
\left\{
\begin{array}{c}
\text{number of fundamental solutions } x \text{ of type }\alpha
\text{ with }\\
\tau(x) =\tau ;\ \rho(x)=\rho
\end{array}
\right\} \\
n_\beta(\tau,\rho)&=&
\left\{
\begin{array}{c}
\text{number of fundamental solutions } x \text{ of type }\beta
\text{ with }\\
\tau(x) =\tau ;\ \rho(x)=\rho
\end{array}
\right\}
\end{eqnarray*}
and the function
\begin{eqnarray}
\chi (\tau,\rho) &:=&n_\alpha(\tau,\rho)-n_\beta(\tau,\rho)
\end{eqnarray}
\end{definition}

\begin{rem}
For the periodic solutions with the Morse index equal to
an even number $2m$ we have
\begin{displaymath}
\tau=\frac{2m}{k_2T_0}
\end{displaymath}
while for the solutions with Morse index equal to $2m+1$ we have
\begin{displaymath}
\tau \in \left( \frac{2m}{k_2T_{0}},\frac{2m+2}{k_2T_{0}}\right).
\end{displaymath}
The Proposition \ref{propmaslov} implies that a
periodic solution $x(t)$
is periodic of type $\alpha$ iff the Floquet exponent are distinct and
real positive, i.e iff the symplectic matrix $\gamma(T)$ is in the $\alpha$
component of $Sp(2)$.
On the contrary, $x(t)$ is periodic of type $\beta$ iff the eigenvalues
are complex or real negative, i.e. if $\gamma(T)$ is in the $\beta$ component
of $Sp(2)$. The periodic orbits with an even Morse index are of kind $\alpha$,
i.e have positive distinct Floquet multipliers. By Proposition \ref{propTwist} 
we have that
the twisting frequency is the mean Morse index, so if $x(t)$ is of kind $\alpha$ we have
\begin{displaymath}
\tau = \frac{1}{2\pi T} \int_{S^1} j(T,\sigma) d\sigma=
\frac{1}{2\pi k_2T_0} \int_{S^1}2m\ d\sigma=
\frac{2m}{k_2T_0}.
\end{displaymath}
When the Morse index is an odd number we have that
$j(T,\sigma)$ is not constant but it assumes only the values $2m+1$ and $(2m+1)\pm 1$,
so we obtain the other estimate.
\end{rem}

\begin{prop}
Let $\rho=\frac{k_1}{k_2T_0}$. We have $\chi(0,\rho)>0$.
\end{prop}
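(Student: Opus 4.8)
The plan is to reduce $\chi(0,\rho)$ to the number of fundamental solutions of Morse index $0$, and then to produce at least one such solution from the Morse relation already obtained in Proposition \ref{parity}. Throughout I assume, as is the standing convention, that $k_1$ and $k_2$ are coprime, so that the fundamental solutions of rotation frequency $\rho=k_1/(k_2T_0)$ are exactly the elements of $C^2_{k_1,k_2T_0}$.

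First I would check that $n_\beta(0,\rho)=0$. A fundamental solution of type $\beta$ has, by definition, odd Morse index, say $m(x,k_2T_0)=2m+1$ with $m\geq 0$, and by the Remark preceding this Proposition its twisting frequency lies in the open interval $\left(\frac{2m}{k_2T_0},\frac{2m+2}{k_2T_0}\right)$, whose left endpoint is non-negative; hence $\tau(x)>0$ and such an orbit never contributes to $\chi(0,\rho)$. Therefore $\chi(0,\rho)=n_\alpha(0,\rho)$. On the other hand, again by the Remark, a solution of type $\alpha$ has even Morse index $2m$ with $\tau=\frac{2m}{k_2T_0}$, so the condition $\tau(x)=0$ is equivalent to $m(x,k_2T_0)=0$. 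Since every critical point of $f$ on $H^1_{k_1,k_2T_0}$ is $C^2$ by the usual bootstrap and thus lies in $C^2_{k_1,k_2T_0}$, i.e.\ is a fundamental solution with rotation frequency $\rho$, we conclude that $n_\alpha(0,\rho)$ equals the number of critical points of $f$ on $H^1_{k_1,k_2T_0}$ having Morse index $0$.

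It remains to show this number is positive. I would use the Morse relation (\ref{morserelations}), valid because (\ref{equation1}) is non-resonant (so all critical points are nondegenerate and finite in number) and because ${\cal P}_\lambda(H^1_{k_1,k_2T_0})=1+\lambda$ by Lemma \ref{basiclemma}:
\begin{displaymath}
\sum_{x\ \text{critical}}\lambda^{m(x)}=(1+\lambda)\,\tilde{\mathcal Q}_\lambda,
\qquad \tilde{\mathcal Q}_\lambda=1+{\mathcal Q}_\lambda ,
\end{displaymath}
where $\tilde{\mathcal Q}_\lambda$ has non-negative integer coefficients and constant term at least $1$. Comparing the coefficients of $\lambda^0$, the number of critical points of Morse index $0$ equals the constant term of $\tilde{\mathcal Q}_\lambda$, hence is at least $1$. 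Thus $n_\alpha(0,\rho)\geq 1$ and $\chi(0,\rho)=n_\alpha(0,\rho)>0$.

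The only delicate point is the implication ``$\tau=0\Rightarrow$ Morse index $0$'' used to exclude $\beta$-orbits; beyond appealing to the Remark, one can see it directly from the formula $\tau=\frac{1}{2\pi k_2T_0}\int_0^{2\pi} j(k_2T_0,e^{i\omega})\,d\omega$: since $j(k_2T_0,\cdot)$ is a non-negative, integer-valued, piecewise constant function, $\tau=0$ forces $j(k_2T_0,\sigma)\equiv 0$, in particular $m(x,k_2T_0)=j(k_2T_0,1)=0$, which is even, so $x$ is of type $\alpha$. Everything else is bookkeeping on the Poincar\'e polynomial.
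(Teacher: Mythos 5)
Your proof is correct and follows essentially the same route as the paper: both first observe (via the preceding Remark) that only $\alpha$-type orbits can have $\tau=0$, so $\chi(0,\rho)=n_\alpha(0,\rho)$, and then invoke the Morse relations together with Lemma \ref{basiclemma} (which gives ${\cal P}_\lambda(H^1_{k_1,k_2T_0})=1+\lambda$) to extract a critical point of Morse index $0$. You merely fill in more of the bookkeeping — in particular the direct argument from the integral formula for $\tau$ that $\tau=0$ forces $j(k_2T_0,\cdot)\equiv 0$ — but the structure and key ingredients match the paper's proof.
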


\begin{proof}
For all $\rho=\frac{k_1}{k_2T_0}$,
by the previous remark we have immediately that
$\chi(\tau,\rho) \geq 0$ for $\tau=\frac{2m}{k_2T_0}$
and $\chi(\tau,\rho) \leq 0$ for $\tau \in \left( \frac{2m}{k_2T_{0}},
\frac{2m+2}{k_2T_{0}}\right)$. Furthermore,
the Morse relations given by (\ref{morserelationsclass})
and Lemma \ref{basiclemma} show that there exist
periodic orbits with Morse index 0. This concludes the proof.
\end{proof}

\begin{definition}
For all $\rho=\frac{k_1}{k_2T_0}$ we set the multiplicity functions
\begin{displaymath}
\nu (\tau,\rho) :=\sum_{\zeta \leq \tau }\chi (\zeta,\rho)
\end{displaymath}
\begin{displaymath}
\eta(\tau,\rho) :=\sum_{\zeta < \tau }\chi (\zeta,\rho)
\end{displaymath}

Clearly $\nu(\tau,\rho)=\eta(\tau,\rho)+ \chi(\tau,\rho)$.\\
These functions are well defined because $\chi(\tau,\rho) \neq 0$ only for a
finite number
of $\tau$. The total number of $\alpha$ and $\beta$ periodic solutions
are given by the Morse relations given by (\ref{morserelationsclass}).
The topology of $H^1_{k_1,k_2T_0}$ given by Lemma
\ref{basiclemma} and the Morse relations imply that the total number of
solutions with even Morse index are
equal to the number of solutions with odd Morse index.
If we call $\tau_{max}$ the maximum value of $\tau$ among the fundamental
periodic solutions, we have
$\nu(\tau, \rho)=0$ if $\tau \geq \tau_{max}$.
\end{definition}

\begin{prop}\label{eta}
Let $\rho=\frac{k_1}{k_2T_0}$, there exists $\epsilon_0 >0$ such that
\begin{eqnarray*}
\eta(\tau+\epsilon, \rho) = \nu(\tau, \rho) && \forall \ 0 < \epsilon < \epsilon_0;\\
\nu(\tau+\epsilon, \rho) = \nu(\tau, \rho) && \forall \ 0 < \epsilon < \epsilon_0.
\end{eqnarray*}
\end{prop}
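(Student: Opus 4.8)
The key point is that the functions $\chi(\cdot,\rho)$, $\nu(\cdot,\rho)$ and $\eta(\cdot,\rho)$ are built out of the twisting frequencies $\tau(x)$ of the (finitely many) fundamental periodic solutions with rotation frequency $\rho$, together with Morse indices of those solutions. Since eq.~(\ref{equation1}) is non-resonant and $f$ satisfies $(PS)$ and is bounded on $H^1_{k_1,k_2T_0}$, the set of such solutions is finite, so the set
$$
\Theta_\rho := \{ \tau(x) \ :\ x \text{ fundamental solution with } \rho(x)=\rho \}
$$
is a finite subset of $\R^+$, and $\chi(\zeta,\rho)=0$ for every $\zeta \notin \Theta_\rho$. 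Fix $\tau$ and let $\epsilon_0>0$ be small enough that the interval $(\tau,\tau+\epsilon_0)$ contains no point of $\Theta_\rho$; such $\epsilon_0$ exists precisely because $\Theta_\rho$ is finite.

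With this choice of $\epsilon_0$, the second identity is immediate: for $0<\epsilon<\epsilon_0$ the sums $\sum_{\zeta\le\tau}$ and $\sum_{\zeta\le\tau+\epsilon}$ differ only by terms $\chi(\zeta,\rho)$ with $\zeta\in(\tau,\tau+\epsilon]\subseteq(\tau,\tau+\epsilon_0)$, and each such term vanishes; hence $\nu(\tau+\epsilon,\rho)=\nu(\tau,\rho)$. For the first identity, note $\eta(\tau+\epsilon,\rho)=\sum_{\zeta<\tau+\epsilon}\chi(\zeta,\rho)$; by the same argument the only possible extra term compared with $\nu(\tau,\rho)=\sum_{\zeta\le\tau}\chi(\zeta,\rho)$ is a contribution at points $\zeta\in(\tau,\tau+\epsilon)$, which again lie outside $\Theta_\rho$, and no term is lost since $\zeta\le\tau$ is equivalent to $\zeta<\tau+\epsilon$ on $\Theta_\rho$ once $(\tau,\tau+\epsilon)\cap\Theta_\rho=\emptyset$. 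Therefore $\eta(\tau+\epsilon,\rho)=\sum_{\zeta\le\tau}\chi(\zeta,\rho)=\nu(\tau,\rho)$.

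The only genuine thing to justify carefully is the finiteness of $\Theta_\rho$, i.e.\ that there are finitely many fundamental solutions with the given $\rho$. This follows from the remarks already established: $f$ is bounded on $H^1_{k_1,k_2T_0}$, satisfies $(PS)$, and by non-resonance all its critical points are non-degenerate, hence isolated; a standard compactness argument (the Morse relations of the previous section implicitly use it in Proposition~\ref{parity}) then shows their number is finite. So the main obstacle is not an obstacle at all — it is simply bookkeeping with a finite set — and the only care needed is to state $\epsilon_0$ uniformly: since $\Theta_\rho$ is finite one may in fact take $\epsilon_0 = \min\{\,\zeta'-\zeta : \zeta,\zeta'\in\Theta_\rho,\ \zeta<\zeta'\,\}$ (and $\epsilon_0$ arbitrary if $|\Theta_\rho|\le 1$), which works simultaneously for every $\tau$.
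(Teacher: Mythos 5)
Your argument is correct and follows essentially the same route as the paper: finitely many fundamental solutions with rotation frequency $\rho$ implies $\chi(\cdot,\rho)$ is supported on a finite set, so one can pick $\epsilon_0>0$ with $\chi(\xi,\rho)=0$ for $\xi\in(\tau,\tau+\epsilon_0)$, from which both identities follow by inspecting the defining sums. Your closing observation that $\epsilon_0$ can even be chosen uniformly in $\tau$ is a small refinement beyond what the paper states, but the core reasoning is identical.
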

\begin{proof}
Given $\rho=\frac{k_1}{k_2T_0}$, we know that there exist a finite
number of fundamental solutions. Therefore, there exists  $\epsilon_0 >0$
such that $\chi(\xi,\rho)=0$ if $\xi \in (\tau, \tau + \epsilon_0)$.
The proof follows straightforward.
\end{proof}

\begin{prop}
\label{basicprop} Let $y$ be a non-fundamental periodic solution  of
$\ddot{x}+V'(t,x)=0$ in
$C^2_{pk_1,pk_2T_0}$ with $p$ prime. Then $y(t)$, $y(t+k_2T_0)$,
$y(t+2k_2T_0)$,...,
$y(t+(p-1)k_2T_0)$ are $p$ distinct non-fundamental periodic solutions.
\end{prop}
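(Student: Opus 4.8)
The plan is to exploit the translation symmetry of the equation in the $t$ variable. The potential $V(t,x)$ is $T_0$-periodic in $t$, hence also $k_2T_0$-periodic; so if $y(t)$ solves $\ddot{x}+V'(t,x)=0$, then for each $j\in\{0,1,\dots,p-1\}$ the shifted function $y_j(t):=y(t+jk_2T_0)$ again solves the same equation. Since $y\in C^2_{pk_1,pk_2T_0}$, i.e. $y(t+pk_2T_0)=y(t)+pk_1$, each $y_j$ satisfies the same identity and therefore also lies in $C^2_{pk_1,pk_2T_0}$. Moreover each $y_j$ has the same rotation frequency $\rho=\frac{k_1}{k_2T_0}$ as $y$. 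Thus I first record that the $p$ time-translates are all periodic solutions in the prescribed space with the prescribed rotation frequency.

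Next I would show that each $y_j$ is non-fundamental, i.e. $y_j\notin C^2_{k_1,k_2T_0}$. Suppose some $y_j$ were fundamental, $y_j(t+k_2T_0)=y_j(t)+k_1$. Shifting back by $jk_2T_0$ (which is a legitimate substitution since the defining relation is invariant under $t\mapsto t-jk_2T_0$), we would get $y(t+k_2T_0)=y(t)+k_1$, contradicting the assumption that $y$ is non-fundamental. Hence every $y_j$ is non-fundamental.

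The main point — and the step I expect to require the most care — is showing the $p$ solutions $y_0,\dots,y_{p-1}$ are \emph{distinct}. Suppose $y_i=y_j$ as elements of $C^2_{pk_1,pk_2T_0}$ with $0\le i<j\le p-1$; set $d=j-i\in\{1,\dots,p-1\}$. Then $y(t+dk_2T_0)=y(t)$ for all $t$, so $y$ is $dk_2T_0$-periodic (as a map to $S^1$), but $y(t+pk_2T_0)=y(t)+pk_1$. Writing $p=qd+s$ with $0\le s<d$ via the division algorithm and iterating the $d$-periodicity, one finds $y(t+pk_2T_0)=y(t+sk_2T_0)$, hence $y(t+sk_2T_0)=y(t)+pk_1$; but the left side only increases $y$ by a fixed amount determined by $s<d\le p-1$ windings' worth, and comparing winding numbers forces $s=0$ and $d\mid p$. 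Since $p$ is prime and $1\le d\le p-1$, this is impossible. Therefore the $y_j$ are pairwise distinct, which completes the argument. The only subtlety is bookkeeping the winding count carefully: from $d\mid p$ and $y$ being $dk_2T_0$-periodic one gets that $y$ makes exactly $(p/d)^{-1}\cdot$(total windings)$=pk_1 d/p$ windings in $dk_2T_0$ time, i.e. $y\in C^2_{k_1 d/p,\,dk_2T_0}$; since $k_1$ and $k_2$ are coprime and $d<p$, primality of $p$ rules this out, giving the contradiction.
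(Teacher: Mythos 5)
Your overall strategy matches the paper's: establish that each translate $y(t+jk_2T_0)$ is a solution of the same equation (this part is correct and mirrors the paper), then argue that if two of them coincide, the primality of $p$ forces $y$ to be $k_2T_0$-periodic, contradicting non-fundamentality. Your observation that each translate is itself non-fundamental is also correct, though not strictly needed for distinctness.

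The number-theoretic step, however, has two genuine gaps. First, when you write $p=qd+s$ with $0\le s<d$ and iterate the $dk_2T_0$-periodicity, you obtain $y(t+sk_2T_0)=y(t)+c$ for some integer $c$; this does \emph{not} force $s=0$ in a single step. If $s\neq 0$ you have merely produced a shorter shift-period $sk_2T_0$, and you must continue (the full Euclidean algorithm) until you reach $\gcd(d,p)k_2T_0=k_2T_0$. Second, the claimed contradiction is mis-stated: ``$d\mid p$ with $1\le d\le p-1$ is impossible'' is false, since $d=1$ always satisfies it. The actual contradiction is that $\gcd(d,p)=1$ yields, via B\'ezout ($ad+bp=1$), a relation $y(t+k_2T_0)=y(t)+c$ for some $c\in\Z$; iterating $p$ times and comparing with $y(t+pk_2T_0)=y(t)+pk_1$ forces $c=k_1$, so $y\in C^2_{k_1,k_2T_0}$, i.e.\ $y$ is fundamental --- contradicting the hypothesis. (You also silently drop the integer shift when you assert $y(t+dk_2T_0)=y(t)$: the equality holds in $S^1$, so the lifted statement is $y(t+dk_2T_0)=y(t)+m$ for some $m\in\Z$ that must be tracked through the argument.) To be fair, the paper's own proof is equally terse here --- it simply asserts ``$p$ is prime and that contradicts our hypothesis'' --- but your attempt to make the bookkeeping explicit exposes precisely the Euclidean-algorithm/B\'ezout step that is missing from both.
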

\begin{proof}
$y$ is a periodic solution that makes $pk_1$ windings
in $pk_2T_0$ time; $y$ is non-fundamental, thus, it is
nonperiodic of period $k_2T_0$.

At first we show that $y(t+lk_2T_0)$ is a solution. We have that
\begin{eqnarray*}
\ddot y(t+lk_2T_0)+V'(t,y(t+lk_2T_0))&=&\\
=\ddot y(t+lk_2T_0)+V'(t+lk_2T_0,y(t+lk_2T_0))&=&0.
\end{eqnarray*}
Furthermore, suppose that there exists $l\neq m$ with $l,m <p$ such that
\begin{displaymath}
 y(t+lk_2T_0)=y(t+mk_2T_0).
\end{displaymath}
After a change of variables we have that
\begin{displaymath}
 y(t+(l-m)k_2T_0)=y(t)\ \forall t.
\end{displaymath}
but $p$ is prime and that contradicts our hypothesis
\end{proof}

Now, we can prove Theorem \ref{primoteo} and Theorem \ref{maintheorem}.
\begin{proof}[Proof of Theorem \ref{primoteo}]

Given any rotation frequency $\rho \in \frac{1}{T_0} \mathbb{Q}$, we take
$k_1$ and $k_2$ coprime such that $\rho=\frac{k_1}{k_2T_0}$.
If eq. (\ref{equation1}) is non-resonant we have, by Proposition \ref{parity}, an even number of periodic solutions in
$H^1_{pk_1,pk_2T_0}$ for any $p \in \N^+$.

These periodic solutions are fundamental solutions if we take $p=1$.
\\

Clearly, if $x \in H^1_{k_1,k_2T_0}$ then $x \in H^1_{pk_1,pk_2T_0}$ and the Morse index
$m(x,pk_2T_0)$ fulfills the property $(iii)$ of Proposition \ref{propTwist}
\begin{displaymath}
\tau \left( x\right) pk_2T_0-1 \leq m(x,pk_2T_0)\leq\tau \left( x\right) pk_2T_0+1.
\end{displaymath}
The Morse relations assures
that there exist $y \in H^1_{pk_1,pk_2T_0}$ with $m(y,pk_2T_0)=1$.
This orbit fulfills $\rho(y)=\frac{k_1}{k_2T_0}$ and it is non-fundamental when
$p$ is sufficently large
because it cannot be $k_2T_0$ periodic. Indeed, the property $(iii)$ of 
Proposition \ref{propTwist} assures that all the periodic orbits $x(t)$ in 
$H^1_{k_1,k_2T_0}$ with Morse index 1 have a Morse index $m(x,pk_2T_0) > 1$
when $p$ is sufficently large.

Moreover, the periodic orbits $x(t)$ in 
$H^1_{k_1,k_2T_0}$ with Morse index 0 have a Morse index $m(x,pk_2T_0) = 0$
for the same reason.

This proves that, taken $p$  sufficently large, the periodic orbit 
$y \in H^1_{pk_1,pk_2T_0}$ with $m(y,pk_2T_0)=1$ cannot be $k_2T_0$ periodic and therefore it is non-fundamental.

Hence, there exist infinitely many non-fundamental orbits with
$\rho=\frac{k_1}{k_2T_0}$.

\end{proof}

\begin{proof}[Proof of Theorem \ref{maintheorem}]

Without any lack of generality we demonstrate the theorem for $k_1=k$ and
$k_2=1$. The generalization
to $\rho=\frac{k_1}{k_2T_0}$ is straightforward. We consider, therefore,
the case $\rho=\frac{k}{T_0}$. Moreover, in order to avoid a too heavy notation
we will use $\nu(\tau)$, $\chi(\tau)$  and $\eta(\tau)$ instead of
$\nu(\tau,\rho)$, $\chi(\tau,\rho)$ and $\eta(\tau,\rho)$. All this functions
have to be considered, however, depending on $\rho$.\\
The leading idea for these results is that a $T_0$ periodic solution
$x(t)\in H^1_{k,T_0}$ is also a $pT_0$ periodic solution. In this case
we can consider $x\in H^1_{pk,pT_0}$.

The Morse relations (\ref{morserelationsclass}) for the $pT_{0}$-periodic
solutions may be written in the following way

\begin{displaymath}
\sum_{j}a_j\lambda^j=
1+\lambda+(1+\lambda) {\cal Q}_\lambda = (1+\lambda) \sum_{j} q_j\lambda^j .
\end{displaymath}
with a compact notation
\begin{eqnarray}
a_{0} &=&q_{0}  \label{morse1}\\
a_{j} &=&q_{j}+ q_{j-1}  \nonumber
\end{eqnarray}
or in a non compact form
\begin{eqnarray}
q_{0} &=&a_{0} \nonumber   \\
q_{1} &=&a_{1}-a_{0}  \nonumber \\
q_{2} &=&a_{2}-a_{1}+a_{0} \label{morse2} \\
\ldots &=&\ldots\ldots\ldots  \nonumber \\
q_{2n} &=&a_{2n}-\ldots\ldots\ldots-a_{1}+a_{0}  \nonumber \\
q_{2n+1} &=&a_{2n+1}-\ldots\ldots\ldots+a_{1}-a_{0}  \nonumber
\end{eqnarray}

Let us consider the Modular arithmetic given by the function
$\left[ \cdot \right] :{\Z\rightarrow }{\Z}_{p},$. For any
$a_{j}$, Proposition \ref{basicprop} implies that
\[
\left[ a_{j}\right] =\left[ \alpha _{j}\right]
\]
where $\alpha _{j}$ is the number of the $pT_{0}$-periodic solutions with
Morse index $j$ that
are fundamental solutions.\\
If $j$ is even, the $\alpha _{j}$  fundamental solutions  $x(t)$ are of
kind $\alpha$ and we have
\[
j=pm(x,T_{0})=p\tau \left( x\right) T_{0}.
\]
We have
\[
\tau \left( x\right) =\frac{j}{pT_{0}}
\]
and
\[
\alpha _{j}=\chi \left( \tau \left( x\right) \right) =
\chi \left( \frac{j}{pT_{0}}\right) .
\]

If $j$ is odd, the $\alpha _{j}$ fundamental periodic solutions $x(t)$
are of kind $\beta $ and we have
\[
\tau \left( x\right) pT_0-1<j<\tau \left( x\right) pT_0+1
\]
and, hence,
\begin{displaymath}
\frac{j-1}{pT_{0}} < \tau \left( x\right) <\frac{j+1}{pT_{0}}
\end{displaymath}

\[
\tau \left( x\right) \in \left( \frac{j-1}{pT_{0}},\frac{j+1}{pT_{0}}\right)
=\frac{1}{pT_{0}}\left( j-1,j+1\right).
\]
We obtain
\[
\alpha _{j}=
- \sum_{\tau \in \left( \frac{j-1}{pT_{0}},\frac{j+1}{pT_{0}}\right) }
\chi \left( \tau \right)
\]

If we take $p$ prime and we use the
Modular arithmetics, the Morse relations
(\ref
{morse2}) becomes
\begin{eqnarray*}
\left[ q_{0}\right] &=&\left[ \alpha _{0}\right] = \chi \left( 0\right) =
\nu(0)
\\
\left[ q_{1}\right] &=&\left[ \alpha _{1}\right] -\left[ \alpha _{0}\right] =
\left[ - \sum_{\tau \in\left( \frac{0}{pT_{0}},\frac{2}{pT_{0}}\right) }
\chi \left( \tau \right)\right]- \left[\chi \left( 0\right)\right]=
\left[ \chi \left( \frac{2}{pT_{0}}\right) -\nu
\left( \frac{2}{pT_{0}}\right) \right]
\\
\left[ q_{2}\right] &=&\left[ \alpha _{2}\right] -\left[ \alpha _{1}\right] +
\left[ \alpha _{0}\right] =\left[ \chi \left( \frac{2}{pT_{0}}\right) \right]
-\left[- \sum_{\tau \in \left( \frac{0}{pT_{0}},\frac{2}{pT_{0}}\right) }
\chi\left( \tau \right) \right] +
\left[ \chi \left( 0\right) \right]
\\
&=&\left[ \nu \left( \frac{2}{pT_{0}}\right) \right]
\\
\ldots &=&\ldots\ldots\ldots
\\
\left[ q_{2n}\right] &=&\left[ \alpha _{2n}\right] -\ldots\ldots\ldots  -
\left[ \alpha_{1}\right] +
\left[ \alpha _{0}\right] =
\left[ \nu \left( \frac{2n}{pT_{0}}\right) \right]
\\
\left[ q_{2n+1}\right] &=&\left[ \alpha _{2n+1}\right] -\ldots\ldots \ldots +
\left[\alpha _{1}\right] -\left[ \alpha _{0}\right] =
\left[ \chi \left( \frac{2n+2}{pT_{0}}\right) -
\nu \left( \frac{2n+2}{pT_{0}}\right) \right].
\end{eqnarray*}

The periodic solutions with Morse index $2n$
are of type $\alpha$ and with twisting frequency $\tau= \frac{2n}{pT_0}$ .

We have
\begin{displaymath}
\left[ q_{2n}\right] =\left[ \nu \left( \frac{2n}{pT_{0}}\right) \right].
\end{displaymath}
If $[\nu \left( \frac{2n}{pT_{0}}\right)] \neq 0$, we have $\left[ q_{2n}\right] \neq 0$ and, therefore,
$q_{2n}\neq 0$ and $a_{2n} \geq [\nu \left( \frac{2n}{pT_{0}}\right)]$.
\\

On the other hand, the periodic solutions with twisting frequency 
$\tau$ such that 
$|\tau - \frac{2n+1}{pT_0}| < \frac{1}{pT_0}$
are of type $\beta $ with Morse index $2n+1$.

We have $$\left[ q_{2n+1}\right]= \left[ \chi \left( \frac{2n+2%
}{pT_{0}}\right) -\nu \left( \frac{2n+2}{pT_{0}}\right) \right] = [-\eta(\frac{2n+2}{pT_0})].$$ If  $[- \eta(\frac{2n+2}{pT_0})] \neq 0$,  we have $\left[ q_{2n+1}\right] \neq 0$ and, therefore,
$q_{2n+1}\neq 0$ and $a_{2n+1} \geq [- \eta(\frac{2n+2}{pT_0})]$.
\\
\\
We have demonstrated that for all $\tau \in \mathbb{N}/(pT_0)$, $p$ prime,
there exist orbits with twisting frequency arbitrary close to $\tau$.

In particular, if $\tau=\frac{2n}{pT_0}$, there exist at least $[\nu(\tau)]$
 solutions $x(t)$ of type $\alpha$ and $pT_0$-periodic such that
\begin{displaymath}
\tau(x)=\tau
\end{displaymath}

On the other hand, if $\tau=\frac{2n+1}{pT_0}$, there exist at least
$[-\eta(\frac{2n+2}{pT_0})]$  solutions $y(t)$ of type $\beta$
and $pT_0$-periodic
such that
\begin{displaymath}
|\tau(y)-\tau|<\frac{1}{pT_0}.
\end{displaymath}

\end{proof}
\begin{rem}
Theorem \ref{maintheorem} gives a lower bound on the number of solutions in $C_{pk_1,pk_2T_0}$. Notice that the periodic solutions 
are non-fundamental any time we choose $\tau$ sufficently far from any twisting
number of the fundamental solutions. In this case, the non-fundamental periodic solutions are at least $p$ by Proposition (\ref{basicprop}).
\end{rem}

As a consequence of Theorem \ref{maintheorem} we can prove the following corollary.

\begin{cor}
Let $\rho=\frac{k_1}{k_2T_0}$ and $\tau$ such that $\nu(\tau,\rho) \neq 0$.\\
Then, there exist a sequence of non-fundamental orbits
$x_n$  of type $\alpha$ and a sequence of non-fundamental orbits
$y_n$  of type $\beta$ such that
$$\tau(x_n) \rightarrow \tau$$
$$\tau(y_n) \rightarrow \tau.$$
\end{cor}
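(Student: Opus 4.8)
The idea is to apply Theorem \ref{maintheorem} along a sequence of primes $p_n \to \infty$ and use the fact that the functions $\nu$ and $\eta$ are locally constant away from the (finitely many) twisting numbers of the fundamental solutions. Fix $\rho = k_1/(k_2 T_0)$ and a twisting frequency $\tau$ with $\nu(\tau,\rho) \neq 0$. By the definition of $\nu$ together with Proposition \ref{eta}, there exists $\eps_0 > 0$ such that $\nu(\zeta,\rho) = \nu(\tau,\rho)$ for all $\zeta \in [\tau, \tau + \eps_0)$ and, after possibly shrinking $\eps_0$, also $\eta(\zeta,\rho) = \eta(\tau,\rho)$ on a suitable one-sided neighbourhood; in particular the values of $\nu$ and $\eta$ near $\tau$ are fixed nonzero integers (for $\eta$ this uses $\nu = \eta + \chi$ and the sign properties of $\chi$ established in the remark following Definition of $\chi$). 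Since these are fixed integers, we may choose a prime $p$ so large that $p$ does not divide $\nu(\tau,\rho)$ (and does not divide the relevant value of $\eta$), so that $[\nu(\cdot,\rho)] \neq 0$ and $[-\eta(\cdot,\rho)] \neq 0$ in $\Z_p$.

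The second ingredient is to pick, for each such large prime $p$, an integer $n = n(p)$ so that $\frac{2n}{p k_2 T_0}$ lands in the interval $[\tau, \tau + \eps_0)$; this is possible because the spacing $\frac{2}{p k_2 T_0}$ of admissible twisting frequencies goes to $0$ as $p \to \infty$. For this choice, Theorem \ref{maintheorem} guarantees at least $[\nu(\frac{2n}{p k_2 T_0},\rho)] \bmod p$ solutions of type $\alpha$ in $C^2_{p k_1, p k_2 T_0}$ with Morse index $2n$ and twisting frequency exactly $\frac{2n}{p k_2 T_0}$, and likewise (choosing $n$ so that $\frac{2n+2}{p k_2 T_0}$ sits in the good interval) at least $[-\eta(\frac{2n+2}{p k_2 T_0},\rho)] \bmod p$ solutions of type $\beta$ with twisting frequency within $\frac{1}{p k_2 T_0}$ of $\frac{2n+1}{p k_2 T_0}$. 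Since the modular counts are nonzero, in each case at least one such solution exists; call them $x_p$ and $y_p$. By construction $\tau(x_p) \to \tau$ and $\tau(y_p) \to \tau$ as $p \to \infty$, and $\rho(x_p) = \rho(y_p) = \rho$.

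It remains to check that these solutions are genuinely non-fundamental and that they are pairwise distinct so as to form honest sequences. For non-fundamentality: a fundamental solution has one of the finitely many fixed twisting numbers $\tau_1,\dots,\tau_N$; once $p$ is large enough that the window around $\tau$ of width $O(1/p)$ is disjoint from $\{\tau_1,\dots,\tau_N\}$ (legitimate since $\tau$ is only constrained to be a limit, and we have freedom in the one-sided neighbourhood to avoid the $\tau_i$), any solution with twisting frequency in that window cannot be fundamental. Then Proposition \ref{basicprop} even upgrades the count: such a non-fundamental solution comes with $p$ distinct translates, all non-fundamental with the same $\tau$ and $\rho$. Finally, to extract genuine sequences $\{x_n\}$, $\{y_n\}$ with the stated convergence, we run the argument along an increasing sequence of primes $p_n$, discarding repetitions; distinctness across different $p_n$ follows because the orbits have different periods (or can be separated by their twisting frequencies, which are strictly approaching $\tau$).

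\textbf{Main obstacle.} The only delicate point is bookkeeping: ensuring simultaneously that (a) $p$ avoids the finitely many prime divisors of the fixed values $\nu(\tau,\rho)$ and $\eta(\tau,\rho)$, (b) the admissible twisting frequency $\frac{2n}{p k_2 T_0}$ (resp. $\frac{2n+2}{p k_2 T_0}$) can be placed in the locally-constant window of $\nu$ (resp. $\eta$), and (c) that window is disjoint from the twisting numbers of the fundamental solutions so that the produced orbits are non-fundamental. All three are arranged by taking $p$ large and exploiting that $\nu,\eta$ are locally constant near $\tau$ (Proposition \ref{eta}) while the mesh $O(1/p)$ shrinks; no new analytic input beyond Theorem \ref{maintheorem} and Proposition \ref{basicprop} is needed.
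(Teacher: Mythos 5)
Your approach matches the paper's almost step for step: approximate $\tau$ from the right by grid points of spacing $O(1/p)$ with $p$ prime, use Proposition~\ref{eta} for local constancy, take $p$ large so the mod-$p$ count from Theorem~\ref{maintheorem} is nonzero, and use finiteness of fundamental solutions to conclude non-fundamentality.

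One small misstatement in the middle: you assert $\eta(\zeta,\rho)=\eta(\tau,\rho)$ on a one-sided neighbourhood and attempt to justify its nonvanishing via $\nu=\eta+\chi$ and the sign properties of $\chi$. That reasoning does not go through, since $\nu(\tau,\rho)\neq 0$ does not imply $\eta(\tau,\rho)\neq 0$ (the two differ by $\chi(\tau,\rho)$, which can absorb all of $\nu$). What Proposition~\ref{eta} actually gives is $\eta(\tau+\eps,\rho)=\nu(\tau,\rho)$ for small $\eps>0$, which is nonzero directly by hypothesis; so the conclusion you need holds, but for a different reason than the one you cite. The paper's own proof contains essentially the same imprecision (it also writes $\eta(\tau_\beta,\rho)=\eta(\tau,\rho)\neq0$). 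Your additional care in placing the one-sided window so as to miss the finitely many twisting numbers of fundamental solutions, and in verifying distinctness of the extracted sequence, is a slight sharpening of the paper's terse final sentence.
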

\begin{proof}
Given any $\tau$, we can choose two approximations of $\tau$ of the following
form: $\tau_{\alpha}=\frac{2n}{pk_2T_0}>\tau$ and 
$\tau_{\beta}=\frac{2n+1}{pk_2T_0}> \tau$.

We know, by Proposition \ref{eta}, that $\nu(\tau_{\alpha},\rho)= \nu(\tau,\rho)\neq 0$ and
$\eta(\tau_{\beta},\rho)=\eta(\tau,\rho)\neq 0$ when $\tau_{\alpha}$ and $\tau_{\beta}$ are
sufficiently close to $\tau$, i.e definitely for $p$ large. Therefore, we can 
choose $p$ such that $\nu(\tau_{\alpha},\rho) \neq 0$ (mod $ p$) and
 $-\eta\left(\frac{2n+2}{pk_2T_0},\rho\right) \neq 0$  
(mod $ p $). 
By Theorem \ref{maintheorem}, we have at least
$\nu(\tau_{\alpha},\rho)$ (mod $ p$) orbits with twisting frequency
$\tau_{\alpha}$ and $-\eta\left(\frac{2n+2}{pk_2T_0},\rho\right)$  
(mod $ p $) 
orbits with twisting frequency close to $\tau_{\beta}$.

Thus, if we take a sequence of $\tau_{\alpha}$ and $\tau_{\beta}$ which converges
to $\tau$ we find a sequence of orbits with twisting frequency
that converges to $\tau$. We can choose these orbits to be non-fundamental
because the fundamental orbits are in a finite number. 
\end{proof}
By this corollary we can prove the last result claimed in the introduction.

\begin{proof}[Proof of Corollary \ref{coroll}]
For any $(\tau,\rho) \in \Sigma$, we can choose a sequence $\rho_k\rightarrow \rho$ and a
sequence $\tau_k\rightarrow \tau$ such that, for all $k$, $\nu(\tau_k,\rho_k)\neq 0$. So,
by the previous corollary,
we can find
two sequence of non-fundamental orbits $x_n^k$ and $y_n^k$ such that
\begin{eqnarray*}
\tau(x_n^k) \rightarrow \tau_k&&\rho(x_n^k) \rightarrow \rho_k;\\
\tau(y_n^k) \rightarrow \tau_k&&\rho(y_n^k) \rightarrow \rho_k.
\end{eqnarray*}
A diagonal argument proofs the corollary.
\end{proof}

\begin{prop}
For all $\rho=\frac{1}{T_0}\mathbb{Q}$, let $x_n$ be
a sequence of non-fundamental orbits such that $\tau(x_n) \rightarrow \tau$, then
$x_n \rightarrow x$ in $C^1_{loc}$.
\end{prop}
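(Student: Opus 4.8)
The plan is to exploit a compactness argument based on the $a\ priori$ bounds already established in the paper plus the standard ODE continuous dependence on initial data. The point is that a sequence of non-fundamental orbits $x_n$ with $\tau(x_n)\to\tau$ lives (after suitable shifts) in spaces $H^1_{p_nk_1,p_nk_2T_0}$ with $p_n$ possibly growing, but the relevant local estimates are $p$-independent.

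First I would recall that each $x_n$ solves $\ddot x_n + V'_x(t,x_n)=0$, and by the Proposition bounding $|\dot x(t)|$ (the one proved via the Lagrange theorem) we have $|\dot x_n(t)|\le \rho + Ck_2T_0$ uniformly in $n$, since the constant $C$ depends only on $\sup|V'_x|$ over the compact set $S^1\times S^1$ and $\rho$ is fixed; note this bound does not degrade when passing to the $p_n$-fold cover. From the equation itself, $|\ddot x_n(t)| = |V'_x(t,x_n(t))|\le C$, again uniformly. Hence, on any fixed compact interval $[-R,R]\subset\R$, the lifts $x_n$ (normalized, say, so that $x_n(0)\in[0,1)$) are bounded in $C^2([-R,R])$. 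By Ascoli–Arzel\`a and a diagonal extraction over $R\to\infty$, a subsequence converges in $C^1_{loc}(\R)$ to some $x$, and in fact in $C^2_{loc}$; passing to the limit in the equation shows $x$ is itself a solution of $\ddot x + V'_x(t,x)=0$.

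Second, I would identify the limit's rotation and twisting data with $\tau$ (and $\rho$). Since $x_n(t)-\rho t$ is bounded with bounded derivative on compacta (from \eqref{bound} and the velocity bound), the limit satisfies $x(t)-\rho t$ bounded, so $x$ has rotation frequency $\rho$; and the Bott/Maslov index is locally determined by the linearized operator coefficients $V''(t,x_n(t))\to V''(t,x(t))$ in $C^0_{loc}$, so by the continuity properties of $j(T,\sigma)$ under coefficient perturbations together with Proposition \ref{propTwist}(i), $\tau(x)=\lim\tau(x_n)=\tau$.

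The main obstacle I anticipate is purely one of bookkeeping rather than analysis: the orbits $x_n$ a priori sit in covers of different orders $p_n$, so one must be careful that ``$x_n\to x$ in $C^1_{loc}$'' is the right notion (pointwise on all of $\R$ after lifting, not convergence within a fixed loop space), and one must check the normalization $x_n(0)\in[0,1)$ is compatible with extracting a limit whose translates are all distinct when $x$ turns out to be non-fundamental — but the uniform $C^2$ bound makes the extraction routine, and the identification of $\tau(x)$ via the local spectral continuity of the operator ${\cal L}_{\sigma,T_0}$ is the only place requiring a genuine (though standard) argument. I would close by remarking that $x$ need not be non-fundamental, consistently with the statement, which only asserts $C^1_{loc}$-convergence to some solution $x$.
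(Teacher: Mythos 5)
There is a genuine gap at the one place you wave through: you assert that the a priori bound $|\dot x_n(t)|\le\rho+Ck_2T_0$ from the earlier Lagrange-theorem proposition is uniform in $n$ and ``does not degrade when passing to the $p_n$-fold cover.'' But if you retrace that proposition's proof for an orbit $x_n\in H^1_{k_{1,n},k_{2,n}T_0}$, the argument produces a point $\bar t_n\in[0,k_{2,n}T_0]$ where $\dot y_n$ vanishes and then estimates $|\dot y_n(t)|\le C|t-\bar t_n|\le Ck_{2,n}T_0$; this bound grows with the period of $x_n$, and since the $x_n$ are non-fundamental their periods $k_{2,n}T_0$ are unbounded. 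Nothing in the non-autonomous setting prevents the velocity of a long-period orbit from being large somewhere, so the uniform global velocity bound you invoke is not available, and the Ascoli--Arzel\`a step is not yet justified. Normalizing $x_n(0)\in[0,1)$ controls position, not velocity, and the constraint $\tau(x_n)\to\tau$ does not obviously control $\dot x_n(0)$ either.

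The paper closes exactly this gap by a time-shift argument that you do not use. Since $\ddot y_n$ is uniformly bounded, what one really needs is a zero of $\dot y_n$ within a \emph{fixed} distance of the origin. The paper invokes Proposition~\ref{basicprop}: because $V$ is $T_0$-periodic in $t$, the translates $x_n(\cdot+jk_2T_0)$ are again solutions with the same $\rho$ and $\tau$, so one may replace $x_n$ by a translate for which the zero $\xi_n$ of $\dot y_n$ lies in $[0,k_2T_0]$. Then the mean value theorem gives $|\dot y_n(0)|\le Ck_2T_0$ uniformly in $n$, and integrating $\ddot x_n$ gives a uniform $W^{2,\infty}$ bound on any fixed $[0,D]$, whence compact embedding into $C^1([0,D])$ (or your Ascoli/diagonal argument) gives $C^1_{loc}$ convergence along a subsequence. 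The remainder of your proposal (passing to the limit in the equation, identifying $\rho(x)$ and $\tau(x)$) is reasonable but goes beyond what the proposition asserts; the essential missing ingredient is the shift that yields a $p_n$-independent bound on $\dot x_n(0)$.
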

\begin{proof}
Let $\rho=\frac{k_1}{k_2T_0}$ and $x_n$ be the non-fundamental orbits
with $\rho(x_n)=\rho$ and $\tau(x_n) \rightarrow \tau$.
The orbit $x_n$ makes $k_{1,n}$ windings of $S^1$ in $k_{2,n}$ time, with
$\frac{k_{1,n}}{k_{2,n}T_0}=\rho$.

The orbits $x_n$ are solution of eq.(\ref{equation1}) and $\ddot{x}_n$
is bounded by the maximum value of $|V'(t,x_n(t))|$ which is a $C^1$ function on the
compact set $S^1 \times S^1$. \\
In order to prove that $x_n \rightarrow x$ in $C^1_{loc}$
we want to show that, fixed a finite interval of time $I=[0,D]$, $x_n \in W^{2,\infty}(I).$
It is sufficient to prove
that $\dot{x}_n(0)$ is bounded. Indeed,
$$
\dot{x}_n(t)=\dot{x}_n(0)+\int_{0}^{t}-V'(x(s),s)ds.
$$
The right-hand side is bounded in $I$ iff $\dot{x}_n(0)$ is bounded.\\
By eq. (\ref{bound}), $x_n(t)=\rho t + y_n(t)$ with
$y_n \in H^1_{0,k_{2,n}T_0}$.
We have that $\dot{x}_n(t)=\rho+\dot{y}_n(t)$.
The function $y_n(t)$ is periodic, therefore $\dot{y}_n(\xi_n)=0$ for
$\xi_n \in [0,k_{2,n}T_0]$. Proposition \ref{basicprop} shows that $x_n(t)$,
$x_n(t+k_2T_0)$,..., are distinct non-fundamental periodic orbits with
the same $\tau$.
We can shift the orbits such a way that $\xi_n \in [0,k_2T_0]$.\\
All the orbits $x_n$ have a point $\xi_n \in [0,k_2T_0]$ where the derivative
is zero, therefore they should have bounded initial velocity
by means of the Lagrange theorem. Indeed
$$
\left|\frac{\dot{y}_n(\xi_n)-\dot{y}_n(0)}{k_2T_0}\right|
\leq \left|\frac{\dot{y}_n(\xi_n)-\dot{y}_n(0)}{\xi_n}\right| \leq const.
$$
Thus, $x_n \in W^{2,\infty}(I)$ which is embedded with a compact embedding
in $C^1(I)$.
\end{proof}

The authors would like to express thanks to Alberto Abbondandolo for
fruitful discussions in the preparation of the paper.

\end{document}